\newtheorem{obs}{Observation}[section]
\newtheorem{thm}[obs]{Theorem}
\newtheorem{cor}[obs]{Corollary}
\newtheorem{lemma}[obs]{Lemma}
\newtheorem{pro}[obs]{Proposition}
\newtheorem{lem}[obs]{Theorem}
\newtheorem{preproof}{{\bf Proof.}}
\newenvironment{proof}[1]{\begin{preproof}{\rm
               #1}\hfill{$\rule{2mm}{2mm}$}}{\end{preproof}}
\def\newpic#1{}
\date{}
\begin{document}
\title{
{\Large{\bf Metric dimension of lexicographic product of some known
graphs }}}
%
{\small
\author{Mohsen Jannesari
\\
[1mm]
{\small \it  Department of  Science}\\
{\small \it  Shahreza Campus, University of Isfahan, Iran} \\
{\small \it Email: m.jannesari@shr.ui.ac.ir}
}}
\maketitle \baselineskip15truept
\begin{abstract}
For an ordered set $W=\{w_1,w_2,\ldots,w_k\}$ of vertices and a
vertex $v$ in a connected graph $G$, the ordered $k$-vector
$r(v|W):=(d(v,w_1),d(v,w_2),\ldots,d(v,w_k))$ is  called  the
(metric) representation of $v$ with respect to $W$, where $d(x,y)$
is the distance between the vertices $x$ and $y$. The set $W$ is
called  a resolving set for $G$ if distinct vertices of $G$ have
distinct representations with respect to $W$. The minimum
cardinality of a resolving set for $G$ is its metric dimension. In
this paper, we investigate the metric dimension of the lexicographic
product  of graphs $G$ and $H$, $G[H]$ for some known graphs.
\end{abstract}

{\bf Keywords:} Lexicographic product; Resolving set; Metric
dimension; Basis; Adjacency dimension.
\\
\par {\bf AMS Mathematical Subject Classification [2020]:} 05C12
\section{Introduction}
In this section, we present some definitions and known results
which are necessary  to prove our main results. Throughout this
paper, $G$ is a  simple  graph
with vertex set $V(G)$, edge set $E(G)$ and order $n(G)$. We use $\overline G$
for the complement of  graph  $G$.
The distance between two
vertices $u$ and $v$, denoted by $d_G(u,v)$, is the length of a
shortest path between $u$ and $v$ in $G$. Also, $N_G(v)$ is the
set of all neighbors of vertex $v$ in $G$. We write these simply
$d(u,v)$ and $N(v)$,  when no confusion can arise.  The diameter of a connected graph $G$ is
${\rm diam}(G)=\max_{_{u,v\in V(G)}} d(u,v)$.
The
symbols $(v_1,v_2,\ldots, v_n)$ and $(v_1,v_2,\ldots,v_n,v_1)$
represent a path of order $n$, $P_n$, and a cycle of order $n$,
$C_n$, respectively. We also use notation $\bf1$ for the vector $(1,1,\ldots,1)$ and $ \bf2$ for $(2,2,\ldots,2)$.
\par
 For an ordered subset $W=\{w_1,\ldots,w_k\}$ of $V(G)$ and a
vertex $v$ of a connected graph $G$, the {\it metric representation}  of $v$ with
respect to $W$ is
$$r(v|W)=(d(v,w_1),\ldots,d(v,w_k)).$$
The set $W$ is  a {\it resolving set} for
$G$ if the distinct vertices of $G$ have different metric representations, with
respect to $W$.
A resolving set $W$ for $G$ with
minimum cardinality is  a {\it metric basis} of $G$, and its
cardinality is the {\it metric dimension} of $G$, denoted by
$\dim(G)$.
\par The concepts of resolving
sets and metric
dimension of a graph
 were introduced independently by Slater~\cite{Slater1975}
and by Harary and Melter~\cite{Harary}.
 Resolving sets have several applications in diverse areas such as  coin weighing problems~\cite{coin}, network
discovery and verification~\cite{net2}, robot
navigation~\cite{landmarks}, mastermind game~\cite{cartesian
product}, problems of pattern recognition and image
processing~\cite{digital}, and combinatorial search and
optimization~\cite{coin}.
For more results about  resolving sets and metric dimension see~\cite{baily,K dimensional,cartesian product,Ollerman,extermal}.
\par The {\it lexicographic product} of graphs $G$ and $H$,  denoted by
$G[H]$, is a graph with vertex set  $V(G)\times
V(H):=\{(v,u)~|~v\in V(G),u\in V(H)\}$, where  two vertices
$(v,u)$ and $(v',u')$ are  adjacent whenever,  $v$ is adjacent to $ v'$, or
$v=v'$ and $u$ is adjacent to $u'$. When the order of $G$ is at least $2$, it
is easy to see that $G[H]$ is a connected graph if and only if
$G$ is a connected graph.
\par Jannesari and Omoomi~\cite{lexico} studied the metric dimension of the lexicographic product of graphs using {\it adjacency
dimension} of graphs.
\par
Let $G$ be a graph, and $W=\{w_1,\ldots,w_k\}\subseteq V(G)$.
For each vertex $v\in V(G)$, the {\it adjacency
representation} of $v$ with respect to $W$ is the $k$-vector
$$r_2(v|W)=(a_G(v,w_1),\ldots,a_G(v,w_k)),$$ where $a_G(v,w_i)=min\{2,d_G(v,w_i)\};1\leq i\leq k.$
 The set $W$ is an {\it adjacency  resolving set} for $G$
if the vectors $ r_2(v|W)$ for $v\in V(G)$ are distinct.
The minimum cardinality of an adjacency resolving set
is the adjacency dimension of $G$, denoted by
$\dim_2(G)$. An adjacency resolving set of cardinality
$\dim_2(G)$ is  an {\it adjacency basis} of $G$.
\par
We say that a set $W$ {\it  (adjacency) resolves} a set $T$ of vertices
in $G$ if the  (adjacency) metric representations of vertices in $T$ with respect to W are distinct.
  To determine whether a given set $W$ is a (an adjacency) resolving
set for $G$, it is sufficient to look at the  (adjacency) metric representations of
vertices in $V(G)\backslash W$, because $w\in W$ is the unique
vertex of $G$ for which $d(w,w)=0$.
\par In this paper we investigate  metric dimension of the lexicographic product, for some known graphs.
 Jannesari and Omoomi~\cite{lexico} studied the metric dimension of the lexicographic product of graphs.
To express their results we need some definitions.
\par Two distinct vertices $u,v$ are said  {\it twins}
if $N(v)\backslash\{u\}=N(u)\backslash\{v\}$. It is called that
$u\equiv v$ if and only if $u=v$ or $u,v$ are twins.
In~\cite{extermal}, it is proved that ``$\equiv$" is an equivalent
relation. The equivalence class of a vertex $v$ is denoted by
$v^*$. Hernando et al.~\cite{extermal} proved that $v^*$ is a
clique or an independent set in $G$. As in ~\cite{extermal}, we
say $v^*$ is of type (1), (K), or (N) if $v^*$ is a class of size
$1$, a clique of size at least $2$, or an independent set of size
at least $2$. We denote the number of equivalence classes of $G$
with respect to ``$\equiv$" by $\iota(G)$. We mean by
$\iota_{_K}(G)$ and $\iota_{_N}(G)$, the number of classes of
type (K) and type (N) in $G$, respectively. We also use $a(G)$ and
$b(G)$ for the number of all vertices in  $G$ which have at least
an adjacent twin and a none-adjacent twin vertex in $G$,
respectively.  On the other way, $a(G)$ is the number of all
vertices in the classes of type (K) and $b(G)$ is the number of
all vertices in the classes of type (N). Clearly,
$\iota(G)=n(G)-a(G)-b(G)+\iota_{_N}(G)+\iota_{_K}(G)$.

\begin{obs}~{\rm\cite{extermal}}\label{twins}
Suppose that $u,v$ are twins in a  graph $G$ and $W$ resolves $G$.
Then $u$ or $v$ is in $W$. Moreover, if $u\in W$ and $v\notin W$,
then $(W\setminus\{u\})\cup \{v\}$ also resolves $G$.
\end{obs}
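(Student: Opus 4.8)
The plan is to first isolate the elementary distance fact that, if $u$ and $v$ are twins in $G$, then $d(u,w)=d(v,w)$ for every vertex $w\notin\{u,v\}$. To see this it suffices to prove $d(v,w)\le d(u,w)$, since swapping the roles of $u$ and $v$ then yields the reverse inequality and hence equality. Take a shortest path from $w$ to $u$ and let $z$ be its second-to-last vertex, so $z\in N(u)$. If $z=v$, the initial segment of this path is a walk from $w$ to $v$ of length $d(u,w)-1$, so $d(v,w)\le d(u,w)$. If $z\neq v$, then $z\in N(u)\setminus\{v\}=N(v)\setminus\{u\}$, so $z\in N(v)$, and replacing the last vertex $u$ by $v$ produces a walk from $w$ to $v$ of length $d(u,w)$; again $d(v,w)\le d(u,w)$.

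The first assertion is now immediate: if neither $u$ nor $v$ belongs to $W$, then $d(u,w)=d(v,w)$ for every $w\in W$, so $r(u|W)=r(v|W)$ with $u\neq v$, contradicting that $W$ resolves $G$.

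For the second assertion, set $W'=(W\setminus\{u\})\cup\{v\}$ and suppose, for contradiction, that $r(x|W')=r(y|W')$ for some distinct vertices $x,y$. Since $v\in W'$, equality of the $v$-coordinates immediately forces $x\neq v$ and $y\neq v$. Next I would rule out $x=u$ (and, by symmetry, $y=u$): assuming $x=u$, the vertex $y$ lies outside $\{u,v\}$, so the distance fact gives $d(y,u)=d(y,v)$; combining this with equality of the $v$-coordinates (which reads $d(u,v)=d(y,v)$) and of the coordinates indexed by $W\setminus\{u\}$ (where $d(u,w)=d(v,w)$ since $w\notin\{u,v\}$), one checks that $r(v|W)=r(y|W)$, contradicting that $W$ resolves $G$ because $v\neq y$. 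Finally, with $x,y\notin\{u,v\}$, the distance fact applied to $x$ and to $y$ shows that the old $u$-coordinate of each representation is recovered from its new $v$-coordinate, so $r(x|W')=r(y|W')$ forces $r(x|W)=r(y|W)$ — again impossible. Hence no such pair exists and $W'$ resolves $G$.

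The only routine bookkeeping is the distance fact; the single step requiring care is the elimination of the case $x=u$, where the correct move is to compare the representation of $y$ with that of $v$ rather than of $u$ — this is precisely where the twin hypothesis does the work.
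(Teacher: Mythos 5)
Your proof is correct. Note that the paper itself offers no proof of this observation --- it is quoted from Hernando et al.~\cite{extermal} as a known result --- so there is no in-paper argument to compare against. Your write-up supplies the standard justification: the key distance fact $d(u,w)=d(v,w)$ for all $w\notin\{u,v\}$ is established cleanly via the second-to-last vertex of a shortest path (correctly covering both the adjacent and non-adjacent twin cases through $N(u)\setminus\{v\}=N(v)\setminus\{u\}$), and the case analysis for the exchange property, including the slightly delicate case $x=u$ where one must compare $y$ against $v$ rather than $u$, is handled correctly.
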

\begin{lem}~\rm\cite{Ollerman}\label{B=1,B=n-1}
Let $G$ be a connected graph of order $n$.
Then,\begin{description}\item (i) $\dim(G)=1$ if and only if
$G=P_n$,\item (ii) $\dim(G)=n-1$ if and only if $G=K_n$.
\end{description}
\end{lem}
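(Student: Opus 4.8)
The plan is to treat the two biconditionals separately, noting that in each the forward implication is routine and the substance lies in the reverse implication.

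For part (i), if $G=P_n$ with $n\ge 2$, I would label the vertices $v_1,\dots,v_n$ consecutively along the path and take $W=\{v_1\}$; since the values $d(v_i,v_1)=i-1$ for $1\le i\le n$ are pairwise distinct, $W$ resolves $G$, so $\dim(G)\le 1$, and as $n\ge 2$ we get $\dim(G)=1$. Conversely, suppose $\dim(G)=1$ with metric basis $\{w\}$. The $n$ numbers $d(v,w)$, $v\in V(G)$, are then pairwise distinct and all lie in $\{0,1,\dots,{\rm diam}(G)\}$, forcing ${\rm diam}(G)\ge n-1$; since a shortest path realizing the diameter has exactly ${\rm diam}(G)+1$ distinct vertices, also ${\rm diam}(G)\le n-1$. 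Hence ${\rm diam}(G)=n-1$, and a diametral path already contains all $n$ vertices. Thus $G$ consists of this path together with possibly some chords, but any chord joining two non-consecutive path vertices would produce a walk of length less than $n-1$ between the two ends of the diametral path, contradicting ${\rm diam}(G)=n-1$. Therefore $G=P_n$. (Alternatively, one can argue directly from the distance layers around $w$: each must contain a single vertex and consecutive layers must be joined, which again forces a path.)

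For part (ii), if $G=K_n$ then $d(x,y)=1$ for all distinct $x,y$, so any $W$ with $|W|\le n-2$ leaves two vertices $u,v$ outside with $r(u|W)=r(v|W)$; hence $\dim(G)\ge n-1$, and since $V(G)\setminus\{v\}$ trivially resolves $G$ for any $v$, $\dim(K_n)=n-1$. Conversely, assume $\dim(G)=n-1$ and let $u,v$ be any two distinct vertices. The set $W=V(G)\setminus\{u,v\}$ has $n-2<\dim(G)$ elements, so it is not resolving; as every vertex other than $u$ and $v$ already lies in $W$, the only possibility is $r(u|W)=r(v|W)$, i.e.\ $d(u,x)=d(v,x)$ for every $x\notin\{u,v\}$. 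In particular $x\in N(u)$ iff $x\in N(v)$ for all such $x$, so $N(u)\setminus\{v\}=N(v)\setminus\{u\}$ and $u\equiv v$. Since this holds for every pair, $V(G)$ is a single equivalence class under ``$\equiv$'', which by the result of Hernando et al.~\cite{extermal} is a clique or an independent set; as $G$ is connected with $n\ge 2$, it must be a clique, i.e.\ $G=K_n$.

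The only genuinely delicate point is the chord argument in the converse of (i): one must verify that no extra edge can survive on a Hamiltonian diametral path without shrinking the diameter. Everything else reduces to elementary distance counting together with the structural dichotomy for $\equiv$-classes quoted in the introduction, so I anticipate no serious obstacle there.
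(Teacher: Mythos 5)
This statement is quoted from Chartrand, Eroh, Johnson and Ollermann~\cite{Ollerman}; the paper itself supplies no proof, so there is nothing internal to compare against. Your argument is correct. In part (i) both directions are the standard ones: counting the $n$ distinct distance values forces ${\rm diam}(G)=n-1$ with a Hamiltonian diametral path, and your chord computation ($d(v_1,v_n)\le (i-1)+1+(n-j)=n-(j-i)<n-1$ for a chord $v_iv_j$ with $j-i\ge 2$) correctly rules out any extra edge. In part (ii) your converse takes a slightly different route from the classical one: the usual argument is contrapositive (if $G\neq K_n$, pick $u,v$ with $d(u,v)=2$ and a common neighbor $w$, and check that $V(G)\setminus\{u,w\}$ resolves $G$, so $\dim(G)\le n-2$), whereas you deduce from the failure of every $V(G)\setminus\{u,v\}$ that all pairs are twins and then invoke the clique/independent-set dichotomy for the single $\equiv$-class. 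Both are valid; yours has the merit of reusing machinery the paper already sets up (the relation $\equiv$ and the observation that vertices of $W$ are automatically resolved), at the cost of importing the Hernando et al.\ structural result where an elementary two-line check would do. The only caveat worth recording is the degenerate case $n=1$: as stated, (i) fails for $P_1$ (whose dimension is $0$), so the theorem should be read with $n\ge 2$ in part (i), exactly as you implicitly assume.
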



\begin{pro}\label{B(H)<B2(H)}\rm\cite{lexico} For every connected graph $G$,   $\dim(G)\leq \dim_2(G)$.
\end{pro}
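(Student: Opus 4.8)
The plan is to prove the stronger, purely set-theoretic statement that \emph{every adjacency resolving set of $G$ is also a (metric) resolving set of $G$}; the claimed inequality then follows at once by applying this to an adjacency basis. The engine of the argument is the pointwise identity $a_G(v,w)=\min\{2,d_G(v,w)\}$ relating the two notions of coordinate.

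First I would fix an adjacency basis $W=\{w_1,\ldots,w_k\}$ of $G$, so that $|W|=\dim_2(G)$ and the vectors $r_2(v|W)$ are pairwise distinct as $v$ ranges over $V(G)$. Next, take two distinct vertices $u,v\in V(G)$. Since $W$ adjacency-resolves $G$, there is an index $i$ with $a_G(u,w_i)\neq a_G(v,w_i)$, that is, $\min\{2,d_G(u,w_i)\}\neq\min\{2,d_G(v,w_i)\}$. A one-line check on nonnegative integers shows that $\min\{2,x\}\neq\min\{2,y\}$ implies $x\neq y$ (if $x=y$ the truncations coincide). Hence $d_G(u,w_i)\neq d_G(v,w_i)$, so $r(u|W)\neq r(v|W)$. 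As $u,v$ were arbitrary, $W$ is a resolving set for $G$, and therefore $\dim(G)\leq|W|=\dim_2(G)$.

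There is essentially no obstacle here: the whole content is the trivial monotonicity remark that truncating distances at $2$ can only merge representations, never split them, so an adjacency resolving set is automatically a resolving set. The only point worth stating carefully is that $G$ is connected (and with at least one vertex), so that all distances $d_G(v,w_i)$ are finite and the metric representation is well defined; this is part of the hypothesis.
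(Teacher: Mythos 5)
Your proof is correct and is the standard argument for this fact (which the paper itself only cites from Jannesari--Omoomi without reproducing a proof): an adjacency resolving set is automatically a resolving set because truncating distances at $2$ can only merge, never separate, representations. Applying this to an adjacency basis gives $\dim(G)\leq\dim_2(G)$ exactly as you state.
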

\begin{pro}\label{B2(H)=B2(overlineH)}\rm\cite{lexico}
For every graph $G$,  $\dim_2(G)=\dim_2(\overline{G})$.
\end{pro}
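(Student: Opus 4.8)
The plan is to exploit the fact that an adjacency representation records, in each coordinate, only whether a distance is $0$, $1$, or at least $2$, and that passing from $G$ to $\overline G$ merely interchanges the roles of the values $1$ and $2$ while fixing the value $0$. Concretely, for any two vertices $v,w$ (note $V(G)=V(\overline G)$) one has: $a_{\overline G}(v,w)=0$ exactly when $v=w$; $a_{\overline G}(v,w)=1$ exactly when $v\neq w$ and $vw\notin E(G)$, i.e. exactly when $a_{G}(v,w)=2$; and $a_{\overline G}(v,w)=2$ exactly when $vw\in E(G)$, i.e. exactly when $a_{G}(v,w)=1$.

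First I would introduce the permutation $\sigma$ of $\{0,1,2\}$ given by $\sigma(0)=0$, $\sigma(1)=2$, $\sigma(2)=1$, and let $\Sigma$ denote the coordinatewise action of $\sigma$ on $\{0,1,2\}^k$; then $\Sigma$ is a bijection. The case analysis above shows that for every ordered set $W\subseteq V(G)$ with $|W|=k$ and every $v\in V(G)$,
$$r_{2}^{\overline G}(v\,|\,W)=\Sigma\bigl(r_{2}^{G}(v\,|\,W)\bigr).$$

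The conclusion is then immediate. Since $\Sigma$ is injective, the vectors $\{r_{2}^{\overline G}(v\,|\,W):v\in V(G)\}$ are pairwise distinct if and only if the vectors $\{r_{2}^{G}(v\,|\,W):v\in V(G)\}$ are pairwise distinct. Hence $W$ is an adjacency resolving set for $\overline G$ if and only if it is one for $G$, and choosing $W$ of minimum cardinality gives $\dim_2(\overline G)=\dim_2(G)$.

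There is no serious obstacle here; the only point requiring care is the bookkeeping of the three cases in the definition of $a_G$ — in particular that the case $v=w$ is shared by $G$ and $\overline G$ and is \emph{not} swapped by $\sigma$, so that the convention of examining only the representations of vertices in $V(G)\setminus W$ remains valid simultaneously for both graphs.
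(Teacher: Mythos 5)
Your proof is correct. The paper states this proposition as a quoted result from the reference on lexicographic products and gives no proof of its own, but your argument is exactly the standard one: since $a_G$ only distinguishes the three cases ``equal / adjacent / non-adjacent'' and complementation permutes the last two while fixing the first, every set $W$ is an adjacency resolving set for $G$ if and only if it is one for $\overline G$, whence $\dim_2(G)=\dim_2(\overline G)$.
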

 Let $G$ be a graph of order
$n$.  It is easy to see that, $1\leq \dim_2(G)\leq n-1$. In the
following proposition,  all graphs $G$ with
$\dim_2(G)=1$ and all graphs $G$ of order $n$ and
$\dim_2(G)=n-1$ are  characterized.
\begin{pro}\label{characterization 1, m-1}\rm\cite{lexico}
If $G$ is a graph of order $n$, then
\begin{description}
 \item (i) $\dim_2(G)=1$ if and only if
$G\in\{P_1,P_2,P_3,\overline{P}_2,\overline{P}_3\}$.
\item (ii) $\dim_2(G)=n-1$ if and only if $G=K_n$ or
$G=\overline{K}_n$.
\end{description}
\end{pro}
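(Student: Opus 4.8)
The plan is to treat the two parts separately, using in both that an adjacency representation records only which vertices are neighbours, and invoking Proposition~\ref{B2(H)=B2(overlineH)} ($\dim_2(G)=\dim_2(\overline{G})$) to halve the casework.

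For part~(i): for a single vertex $w$ the entry $a_G(v,w)$ takes values only in $\{0,1,2\}$, so if $\{w\}$ is an adjacency resolving set then $n(G)\le 3$ (equivalently, $w$ has exactly one neighbour and at most one non-neighbour). It then remains to inspect the finitely many graphs of order at most $3$, and since $\dim_2$ is complement-invariant it is enough to examine $P_1,P_2,P_3,K_3$ and $\overline{K}_3$. An end vertex of $P_n$ ($n\le 3$) is readily seen to be an adjacency resolving set, whereas in $K_3$ (resp.\ $\overline{K}_3$) every two vertices outside a singleton share the representation $\mathbf{1}$ (resp.\ $\mathbf{2}$), so $\dim_2(K_3)=\dim_2(\overline{K}_3)=2$. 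Hence the graphs with $\dim_2=1$ are exactly $P_1,P_2,P_3$ together with $\overline{P}_2$ and $\overline{P}_3$.

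For part~(ii) I would first record the universal bound $\dim_2(G)\le n-1$, witnessed by $W=V(G)\setminus\{v\}$: each $w\in W$ is the unique vertex with a $0$ in coordinate $w$, and $v$ is the unique vertex all of whose coordinates are positive. The backward implication then follows quickly: in $K_n$ any $W$ with $|W|\le n-2$ omits two vertices, both having representation $\mathbf{1}$, so $\dim_2(K_n)=n-1$; and $\dim_2(\overline{K}_n)=\dim_2(K_n)=n-1$ by Proposition~\ref{B2(H)=B2(overlineH)}.

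The forward implication I would establish by contraposition. The crucial observation is that for distinct vertices $u,v$ the set $W=V(G)\setminus\{u,v\}$ is an adjacency resolving set if and only if $r_2(u|W)\ne r_2(v|W)$ — all other pairs of vertices being separated exactly as in the bound above — and since $a_G$ detects only adjacency, $r_2(u|W)=r_2(v|W)$ is equivalent to $N(u)\setminus\{v\}=N(v)\setminus\{u\}$, i.e.\ to $u$ and $v$ being twins (the adjacency analogue of Observation~\ref{twins}). Thus, assuming $G\notin\{K_n,\overline{K}_n\}$, it suffices to exhibit two non-twin vertices; but if no such pair existed, all of $V(G)$ would form a single equivalence class under ``$\equiv$'', which by~\cite{extermal} is a clique or an independent set, forcing $G=K_n$ or $G=\overline{K}_n$, a contradiction. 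Therefore $\dim_2(G)\le n-2$. The only genuine labour is the small-order verification in part~(i); everything else is routine bookkeeping once this twin characterisation of size-$(n-2)$ adjacency resolving sets is in hand.
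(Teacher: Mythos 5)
The paper does not prove this proposition---it is imported verbatim from~\cite{lexico}---so there is no in-text argument to compare yours against; judged on its own, your proof is correct and complete. Both directions are sound: the order bound $n\le 3$ from a single-vertex representation taking values in $\{0,1,2\}$, the complement-invariance reduction, and the key equivalence that $V(G)\setminus\{u,v\}$ adjacency-resolves $G$ precisely when $u,v$ are not twins (combined with the fact from~\cite{extermal} that a twin class is a clique or an independent set) all hold up. One cosmetic slip: your parenthetical ``equivalently, $w$ has exactly one neighbour and at most one non-neighbour'' should read ``at most one neighbour and at most one non-neighbour,'' since otherwise it would exclude $P_1$, $\overline{P}_2$ and $\overline{P}_3$; this does not affect the argument, which only uses $n(G)\le 3$.
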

\begin{pro}\label{B_2(P_m),B_2(C_m)}\rm\cite{lexico}
If $n\geq 4$, then $\dim_2(C_n)=\dim_2(P_n)=\lfloor{{2n+2}\over
5}\rfloor$.
\end{pro}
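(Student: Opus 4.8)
The plan is to translate the statement into a combinatorial question about how a set of landmarks cuts up a path or cycle. The key first observation is that in $P_n$ and $C_n$ every distance is $0$, $1$, or $2$, so for $v\notin W$ the vector $r_2(v|W)$ is determined by the \emph{code} $C(v):=N(v)\cap W$ of landmarks adjacent to $v$; since each $w\in W$ is identified by the $0$ in its own coordinate, $W$ is an adjacency resolving set if and only if the codes $C(v)$, $v\in V(G)\setminus W$, are pairwise distinct. As vertices of $P_n$ and $C_n$ have degree at most $2$, every code has size $0$, $1$, or $2$. Placing $k$ landmarks on $C_n$ cuts it into $k$ \emph{gaps} (maximal runs of consecutive non-landmarks, possibly empty), while on $P_n$ one gets $k-1$ internal gaps together with two \emph{end segments}. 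Call a gap \emph{big} if it contains at least $2$ vertices and an end segment big if it contains at least $1$ vertex. I would then establish the characterization: $W$ is an adjacency resolving set of $C_n$ (resp.\ $P_n$) if and only if (a) every internal gap has at most $3$ vertices and every end segment at most $2$; (b) the number of internal gaps with exactly $3$ vertices plus the number of end segments with exactly $2$ vertices is at most $1$; and (c) no two consecutive segments, in the cyclic (resp.\ linear) order, are both big.

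For necessity: an internal gap with $\ge 4$ vertices, or an end segment with $\ge 3$, contains two vertices with code $\emptyset$, while a size-$3$ internal gap or a size-$2$ end segment contributes exactly one, and at most one vertex may have code $\emptyset$; and if two consecutive big segments share a landmark $w$, the two vertices of those segments adjacent to $w$ both have code $\{w\}$. Sufficiency is a short check that under (a)--(c) the codes of sizes $0$, $1$, $2$ are forced distinct: there is at most one empty code, the size-$1$ codes are the singletons $\{w\}$ that actually occur and by (c) each occurs once, and a size-$2$ code is the pair of landmarks bounding a $1$-vertex gap, distinct pairs for distinct such gaps (the only delicate point is the cycle with $k=2$, where the two gaps share both endpoints, which I would treat directly).

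For the lower bound, fix an adjacency basis of $C_n$ (resp.\ $P_n$), say with $k$ landmarks. By (c) the big segments form an independent set among the $k$ segments arranged cyclically (resp.\ the $k+1$ segments arranged linearly), so their number is at most $\lfloor k/2\rfloor$ (resp.\ $\lfloor k/2\rfloor+1$). A non-big internal gap holds at most one non-landmark vertex, a big internal gap at most two, a big end segment at most one, a non-big end segment none, and by (b) a single segment anywhere may carry one extra vertex. Summing over all segments and using the bound on the number of big ones gives, in both cases, $n\le 2k+\lfloor k/2\rfloor+1=\lfloor 5k/2\rfloor+1$, hence $k\ge\lceil(2n-2)/5\rceil=\lfloor(2n+2)/5\rfloor$ (using $\lceil m/5\rceil=\lfloor(m+4)/5\rfloor$ for integers $m$). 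Thus $\dim_2(C_n)\ge\lfloor(2n+2)/5\rfloor$ and $\dim_2(P_n)\ge\lfloor(2n+2)/5\rfloor$.

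For the upper bound put $k_0=\lfloor(2n+2)/5\rfloor$. Since $k_0$ is the least $k$ with $\lfloor 5k/2\rfloor+1\ge n$, the maximum number $\lfloor 3k_0/2\rfloor+1$ of non-landmark vertices allowed under (a)--(c) with $k_0$ landmarks is at least $n-k_0\ge 0$. Starting from a maximal configuration — gaps in the periodic pattern $2,1,2,1,\dots$, with one gap of size $2$ raised to $3$ and a small adjustment at the cycle's wrap-around — and lowering gap sizes one vertex at a time (which preserves (a)--(c) once $k_0\ge 3$) one reaches a valid configuration with exactly $n-k_0$ non-landmark vertices; placing the landmarks accordingly on $C_n$ (resp.\ $P_n$) gives, by the characterization, an adjacency resolving set. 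Hence $\dim_2(C_n)\le\lfloor(2n+2)/5\rfloor$ and $\dim_2(P_n)\le\lfloor(2n+2)/5\rfloor$, the three values $n\in\{4,5,6\}$ (where $k_0=2$) being verified directly. Combined with the lower bound this yields $\dim_2(C_n)=\dim_2(P_n)=\lfloor(2n+2)/5\rfloor$. The step I expect to be the main obstacle is calibrating conditions (b) and (c) exactly — in particular which segments count as big at the two ends of a path, and making sure no vertex is lost or gained in the lower-bound count — together with the $k=2$ cases on the cycle and the residue-by-residue checking of the construction; the ideas here are elementary, but this is where an error would most easily creep in.
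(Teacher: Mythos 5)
Your argument is correct, but note that the paper you are being compared against does not actually prove this proposition: it imports it verbatim from the cited paper of Jannesari and Omoomi, so there is no in-house proof to match. What you have written is a sound self-contained derivation, and its structural core -- conditions (a)--(c) on gap sizes, the ``at most one gap of three'' clause, and the ban on neighboring big gaps -- is exactly the content that the paper does record, without proof, as Observations~\ref{obs C_m} and~\ref{obs P_m} for adjacency bases of $C_m$ and $P_m$. Your reduction of $r_2(v|W)$ to the code $N(v)\cap W$ is the right first move (every distance in a path or cycle is truncated to $0$, $1$, or $2$), your counting $n\le 2k+\lfloor k/2\rfloor+1=\lfloor 5k/2\rfloor+1$ is tight in both the cyclic and linear segment arrangements (the path loses an internal gap but gains two end segments, and the two computations land on the same bound), and the arithmetic $\lceil (2n-2)/5\rceil=\lfloor (2n+2)/5\rfloor$ is right. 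The places you flag as delicate are indeed the only ones needing care: the cycle with $k=2$ landmarks, where two size-one gaps would share the same bounding pair and hence the same code (this is why the $n\in\{4,5,6\}$ cases are best checked by hand, e.g.\ two adjacent landmarks on $C_4$, two adjacent landmarks on $C_5$, landmarks at distance two on $C_6$, and $\{w_2,w_4\}$ rather than $\{w_1,w_4\}$ on $P_6$), and the observation that for an end segment ``big'' must mean nonempty, since the vertex of a nonempty end segment adjacent to the extreme landmark $w$ always has code $\{w\}$. With those checks written out, your proof is complete and, unlike the paper, actually establishes the formula rather than citing it.
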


\begin{pro}\label{B_2 multipartite}\rm\cite{lexico} If $K_{m_1,m_2,\ldots,m_t}$ is
the complete $t$-partite graph, then
$$\dim_2(K_{m_1,m_2,\ldots,m_t})=\dim(K_{m_1,m_2,\ldots,m_t})=\left\{
\begin{array}{ll}
m-r-1 &  ~{\rm if}~r\neq t, \\
m-r &  ~{\rm if}~r=t, \end{array}\right.$$ where $m_1,
m_2,\ldots,m_r$ are at least $2$, $m_{r+1}=\cdots=m_t=1$, and
$\sum_{i=1}^tm_i=m$.
\end{pro}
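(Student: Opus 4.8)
The plan is to reduce the computation of $\dim_2$ to that of $\dim$, and then to determine $\dim$ from the twin structure of $K:=K_{m_1,m_2,\ldots,m_t}$. For the reduction, note that two vertices of $K$ lying in a common part are at distance $2$ while two vertices in different parts are adjacent, so ${\rm diam}(K)\le 2$; hence $a_K(u,v)=\min\{2,d_K(u,v)\}=d_K(u,v)$ for all $u,v\in V(K)$, so $r_2(v|W)=r(v|W)$ for every $W$ and every $v$, and therefore $\dim_2(K)=\dim(K)$. It then remains to show that $\dim(K)$ equals $m-r-1$ when $r\ne t$ and $m-r$ when $r=t$.

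For the lower bound I would look at the twin classes of $K$. Let $V_1,\ldots,V_t$ be the parts, $|V_i|=m_i$, so that $V_1,\ldots,V_r$ are the parts of size at least $2$ and $V_{r+1},\ldots,V_t$ are singletons. Any two vertices of a common $V_i$ are non-adjacent twins, and since a vertex of a singleton part is adjacent to every other vertex, any two vertices lying in distinct singleton parts are adjacent twins. Hence the ``$\equiv$''-classes of $K$ are $V_1,\ldots,V_r$ (type (N)) together with the set $V_{r+1}\cup\cdots\cup V_t$ of all singleton vertices, which is a class of type (K) if $t-r\ge 2$, of type (1) if $t-r=1$, and is absent if $r=t$. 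By Observation \ref{twins} any resolving set omits at most one vertex from each class, so it contains at least $\sum_{i=1}^r(m_i-1)$ of the vertices in the big parts and at least $\max\{0,t-r-1\}$ of the singleton vertices. Since $\sum_{i=1}^r m_i=m-(t-r)$, this yields
$$\dim(K)\ge\sum_{i=1}^r(m_i-1)+\max\{0,t-r-1\}=\left\{\begin{array}{ll}m-r-1&{\rm if}~r\ne t,\\m-r&{\rm if}~r=t.\end{array}\right.$$

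For the matching upper bound I would first establish the following description of resolving sets: for $x,y\in V(K)\setminus W$ we have $r(x|W)=r(y|W)$ if and only if $x$ and $y$ lie in the same part, or in two distinct parts both disjoint from $W$ — because for $z\notin W$ one has $d(z,w)=2$ exactly when $z,w$ lie in a common part, and $d(z,w)=1$ otherwise. Consequently $W$ is a resolving set of $K$ if and only if $|V_i\setminus W|\le 1$ for every $i$ and at most one part is disjoint from $W$. Now take $W$ to consist of exactly $m_i-1$ vertices of each part $V_i$ of size at least $2$, together with all but one of the singleton vertices. Then every big part meets $W$, so the only part that can be disjoint from $W$ is a single singleton part, and $|V_i\setminus W|\le 1$ for all $i$; thus $W$ is resolving, and counting as above gives $|W|=m-r-1$ if $r\ne t$ and $|W|=m-r$ if $r=t$. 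With the lower bound this completes the proof.

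There is no real obstacle in this argument; the only points calling for care are the case split on $t-r\in\{0,1,\ge 2\}$, which controls the type of the singleton class and hence the ``$-1$'' in the formula, and the verification of the resolving-set description above. As a check, taking $r=0$ forces $K=K_t$ and the formula gives $\dim(K_t)=t-1$, consistent with Proposition \ref{characterization 1, m-1}.
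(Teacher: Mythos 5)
Your proof is correct. Note that the paper gives no proof of Proposition~\ref{B_2 multipartite} at all: it is quoted from~\cite{lexico}, so there is no in-paper argument to compare against. Your route --- observing that ${\rm diam}(K)\le 2$ makes $r_2(v|W)=r(v|W)$ and hence $\dim_2(K)=\dim(K)$, then getting the lower bound from the twin classes $V_1,\ldots,V_r$ plus the class of universal vertices coming from the singleton parts via Observation~\ref{twins}, and the upper bound from the explicit characterization of resolving sets ($|V_i\setminus W|\le 1$ for all $i$ and at most one part disjoint from $W$) --- is the natural one and is consistent with the twin machinery the paper itself relies on elsewhere. The only implicit hypothesis worth making explicit is $t\ge 2$, so that $K_{m_1,\ldots,m_t}$ is connected and $\dim$ is defined; with that said, the case analysis on $t-r\in\{0,1,\ge 2\}$ and the counting are all accurate.
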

\par Jannesari and Omoomi obtained the metric dimension of lexicographic product of graphs through the following four  theorems.
\begin{thm}\label{B1 B2}\rm\cite{lexico}
Let $G$ be a connected graph of order $n$ and $H$ be an arbitrary
graph. If there exist two adjacency bases $W_1$ and $W_2$ of $H$
such that, there is no vertex with adjacency representation
$\bf1$ with respect to $W_1$ and no vertex with adjacency
representation $\bf2$ with respect to $W_2$, then
$\dim(G[H])=\dim(G[\overline H])=n\dim_2(H)$.
\end{thm}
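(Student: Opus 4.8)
The plan is to prove the two inequalities $\dim(G[H])\ge n\dim_2(H)$ and $\dim(G[H])\le n\dim_2(H)$ separately, and then to obtain the assertion for $\overline H$ from the observation that the hypothesis on $H$ is preserved, with $W_1$ and $W_2$ interchanged, when $H$ is replaced by $\overline H$. Assume $n\ge 2$, so that every vertex of $G$ has a neighbour; then for vertices $(v,u),(v',u')$ of $G[H]$ the distance $d_{G[H]}((v,u),(v',u'))$ equals $d_G(v,v')$ if $v\ne v'$ and equals $a_H(u,u')$ if $v=v'$. Recall also (from the paragraph before Observation~\ref{twins}) that a set resolves $G[H]$ once the vertices outside it have pairwise distinct representations.

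For the lower bound, let $S$ resolve $G[H]$ and put $S_v=\{u\in V(H):(v,u)\in S\}$ for each $v\in V(G)$. If some $S_v$ failed to be an adjacency resolving set of $H$, there would be distinct $u,u'\in V(H)\setminus S_v$ with $r_2(u\mid S_v)=r_2(u'\mid S_v)$ in $H$; but then every $(w,x)\in S$ is equidistant from $(v,u)$ and $(v,u')$ — the common distance being $d_G(v,w)$ if $w\ne v$, and $a_H(u,x)=a_H(u',x)$ if $w=v$ (so that $x\in S_v$) — contradicting that $S$ resolves $G[H]$. Hence $|S_v|\ge\dim_2(H)$ for each of the $n$ vertices $v$, and since the sets $S_v$ partition $S$ we get $|S|\ge n\dim_2(H)$.

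The upper bound is the step that uses the hypothesis, and its case analysis is what I expect to be delicate. List $V(G)=\{v_1,\dots,v_n\}$, let $B_i=W_2$ if $v_i^*$ is of type (N) and $B_i=W_1$ otherwise, and put $S=\bigcup_{i=1}^{n}\{v_i\}\times B_i$, so $|S|=n\dim_2(H)$. Given distinct $(v_i,u),(v_j,u')\notin S$, we separate them as follows. If $i=j$, the sub-vectors of their representations indexed by $\{v_i\}\times B_i$ are $r_2(u\mid B_i)$ and $r_2(u'\mid B_i)$, which differ since $B_i\in\{W_1,W_2\}$ is an adjacency basis of $H$. If $i\ne j$, the sub-vector of the representation of $(v_i,u)$ indexed by $\{v_k\}\times B_k$ is the constant vector $d_G(v_i,v_k)\,\mathbf{1}$ for $k\ne i$ and is $r_2(u\mid B_i)$ for $k=i$, and similarly for $(v_j,u')$; so if the two representations were equal we would have $d_G(v_i,v_k)=d_G(v_j,v_k)$ for all $k\notin\{i,j\}$ — whence $v_i\equiv v_j$ by comparison of neighbourhoods — together with $r_2(u\mid B_i)=r_2(u'\mid B_j)=d_G(v_i,v_j)\,\mathbf{1}$. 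As the entries of an adjacency representation are at most $2$, this forces $d_G(v_i,v_j)\in\{1,2\}$, so $v_i^*$ is of type (K) when $d_G(v_i,v_j)=1$ and of type (N) when $d_G(v_i,v_j)=2$. In the first case $B_i=W_1$ and $u$ has adjacency representation $\mathbf{1}$ with respect to $W_1$; in the second $B_i=W_2$ and $u$ has adjacency representation $\mathbf{2}$ with respect to $W_2$ — each contradicting the hypothesis. Thus $S$ resolves $G[H]$, so $\dim(G[H])\le n\dim_2(H)$, and with the lower bound $\dim(G[H])=n\dim_2(H)$.

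Finally, for $\overline H$: by Proposition~\ref{B2(H)=B2(overlineH)} we have $\dim_2(\overline H)=\dim_2(H)$. Since $a_{\overline H}(u,w)=3-a_H(u,w)$ for distinct $u,w\in V(H)$, one checks that $W_1$ and $W_2$ are still adjacency bases of $\overline H$, that no vertex of $\overline H$ has adjacency representation $\mathbf{1}$ with respect to $W_2$ (such a vertex would be outside $W_2$ and would then have adjacency representation $\mathbf{2}$ with respect to $W_2$ in $H$), and, symmetrically, that no vertex of $\overline H$ has adjacency representation $\mathbf{2}$ with respect to $W_1$. Hence $\overline H$ satisfies the hypothesis with $W_1$ and $W_2$ interchanged, and applying the equality just proved to $\overline H$ gives $\dim(G[\overline H])=n\dim_2(\overline H)=n\dim_2(H)$. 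The genuinely substantive point in all of this is recognising, in the upper bound, that two vertices lying in different $H$-copies collide under $S$ exactly when the underlying vertices of $G$ are twins and the two $H$-coordinates carry the forbidden constant adjacency representation $\mathbf{1}$ (for a twin class of type (K)) or $\mathbf{2}$ (for a twin class of type (N)) — precisely the configurations destroyed by the existence of $W_1$ and $W_2$.
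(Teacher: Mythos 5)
Your proof is correct, and since the paper only quotes this theorem from~\cite{lexico} without reproducing a proof, there is nothing internal to compare it against; your argument (per-copy lower bound via adjacency resolving sets, upper bound by placing $W_1$ or $W_2$ in each fibre according to the twin-class type, and the complementation $a_{\overline H}(u,w)=3-a_H(u,w)$ to transfer the hypothesis to $\overline H$) is exactly the standard route taken in the cited source. Your explicit restriction to $n\ge 2$ is also the right call, since the formula fails for $n=1$ (e.g.\ $\dim(P_1[P_4])=1\ne 2$), so that hypothesis is genuinely needed even though the statement leaves it implicit.
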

\begin{thm}\label{thm generalG[H]}\rm\cite{lexico}
Let $G$ be a connected graph of order $n$ and $H$ be an arbitrary
graph. If for each adjacency basis $W$ of $H$ there exist vertices
with adjacency representations $\bf1$ and  $\bf2$ with
respect to $W$, then $\dim(G[H])=\dim(G[\overline
H])=n(\dim_2(H)+1)-\iota(G).$
\end{thm}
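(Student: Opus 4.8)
The plan is to establish the two inequalities $\dim(G[H])\le n(\dim_2(H)+1)-\iota(G)$ and $\dim(G[H])\ge n(\dim_2(H)+1)-\iota(G)$ separately, and then to read off the value of $\dim(G[\overline H])$ from a complementation argument. Throughout, for $S\subseteq V(G[H])$ write $S_u=\{u'\in V(H):(u,u')\in S\}$ for the $u$-th column of $S$, and let $d$ denote distance in $G[H]$. The only fact about $G[H]$ that I need is that, since $G$ is connected of order $n\ge 2$, $d((u,v),(u',v'))=d_G(u,u')$ whenever $u\ne u'$ and $d((u,v),(u,v'))=a_H(v,v')$; thus a vertex of $S$ placed in column $x$ records, inside any other column $u$, only the number $d_G(u,x)$, and inside its own column only adjacency information of $H$.

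For the lower bound let $S$ be a resolving set of $G[H]$. Since two vertices $(u,v),(u,v')$ of a common column can be separated only by a vertex $(u,s)$ with $s\in S_u$, each $S_u$ must be an adjacency resolving set of $H$, so $|S_u|\ge\dim_2(H)$, with equality only when $S_u$ is an adjacency basis. The decisive step is to prove that within each equivalence class $C$ of $G$ (for the relation $\equiv$) \emph{at most one} column of $S$ is an adjacency basis, whence $\sum_{u\in C}|S_u|\ge\dim_2(H)|C|+|C|-1$ and, summing over the classes, $|S|\ge n\dim_2(H)+n-\iota(G)$. Suppose instead that distinct $u_i,u_j\in C$, with $|C|\ge 2$, both have $S_{u_i},S_{u_j}$ adjacency bases. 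If $C$ is of type (K), the hypothesis yields $v$ with $r_2(v|S_{u_i})=\mathbf{1}$ and $v'$ with $r_2(v'|S_{u_j})=\mathbf{1}$, and necessarily $v\notin S_{u_i}$, $v'\notin S_{u_j}$; using that $u_i\equiv u_j$ forces $d_G(x,u_i)=d_G(x,u_j)$ for all $x\notin\{u_i,u_j\}$ and that $d_G(u_i,u_j)=1$, one checks $r((u_i,v)|S)=r((u_j,v')|S)$, contradicting that $S$ resolves $G[H]$. If $C$ is of type (N) the argument is identical with $\mathbf{2}$ replacing $\mathbf{1}$ and with $d_G(u_i,u_j)=2$ (here $n\ge 3$, so two non-adjacent twins have a common neighbour).

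For the upper bound, fix an adjacency basis $W$ of $H$ (nonempty, as otherwise no representation could equal $\mathbf{1}$). Being an adjacency resolving set, $W$ admits at most one vertex with representation $\mathbf{1}$ and at most one with representation $\mathbf{2}$; by hypothesis there is exactly one of each, say $v_1$ and $v_2$, and both lie outside $W$. Hence $W\cup\{v_1\}$ has no vertex of representation $\mathbf{1}$ and $W\cup\{v_2\}$ has no vertex of representation $\mathbf{2}$. Pick one representative in each equivalence class of $G$ and let $S$ consist of the column $W$ in each representative column, $W\cup\{v_1\}$ in each non-representative column of a type-(K) class, and $W\cup\{v_2\}$ in each non-representative column of a type-(N) class; then $|S|=\sum_C(\dim_2(H)|C|+|C|-1)=n(\dim_2(H)+1)-\iota(G)$. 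To see $S$ resolves $G[H]$: vertices of one column are separated because each column of $S$ contains the adjacency basis $W$; two vertices $(u,v),(u',v')$ with $u\not\equiv u'$ are separated by any $(x,s)\in S$ with $x\notin\{u,u'\}$ and $d_G(x,u)\ne d_G(x,u')$, such an $x$ existing precisely because $u\not\equiv u'$ (and $S_x\ne\emptyset$); and two vertices in distinct columns $u_i,u_j$ of one class $C$ can coincide only if some vertex has representation $\mathbf{1}$ (when $C$ is type (K)) or $\mathbf{2}$ (when $C$ is type (N)) with respect to \emph{both} $S_{u_i}$ and $S_{u_j}$, which is impossible since at most the representative column admits such a vertex.

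Finally, passing from $H$ to $\overline H$ interchanges ``adjacent'' and ``non-adjacent'' within columns, hence interchanges the representations $\mathbf{1}$ and $\mathbf{2}$ relative to any fixed $W$ while fixing the $0$-coordinates; consequently $H$ and $\overline H$ have the same adjacency resolving sets, $\overline H$ satisfies the same hypothesis, and $\dim_2(\overline H)=\dim_2(H)$ by Proposition~\ref{B2(H)=B2(overlineH)}. Applying the two inequalities to $\overline H$ then gives $\dim(G[\overline H])=n(\dim_2(H)+1)-\iota(G)=\dim(G[H])$. I expect the decisive step of the lower bound to be the main obstacle: it is where the hypothesis about $\mathbf{1}$- and $\mathbf{2}$-vertices must be combined with the twin structure of $G$ and the distance formula, and where one must check carefully that the two vertices produced are genuinely distinct and agree in every coordinate — this is exactly where $v\notin S_{u_i}$ and $v'\notin S_{u_j}$ enter.
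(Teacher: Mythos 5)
This theorem is stated here as a quoted result from \cite{lexico} and the paper contains no proof of it, so there is no internal proof to compare against; your argument is correct and is essentially the standard one from that reference --- the column decomposition $S_u$ of a resolving set, the ``at most one adjacency-basis column per twin class'' count for the lower bound, and the $W$, $W\cup\{v_1\}$, $W\cup\{v_2\}$ construction for the upper bound --- the same machinery the paper itself invokes in proving its corollary on $K_n[H]$. The only slip is the phrasing in the upper bound that a collision would require ``some vertex'' with representation $\mathbf{1}$ with respect to \emph{both} $S_{u_i}$ and $S_{u_j}$: what is actually needed (and what your construction rules out) is that each of the two columns separately admits its own such vertex, which fails for any non-representative column.
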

\begin{thm}\label{nB2+a(G) K(G)}\rm\cite{lexico}
Let $G$ be a connected graph of order $n$ and $H$ be an arbitrary
graph. If $H$ has the following properties
\begin{description}\item (i) for each adjacency basis of $H$
there exists a vertex with adjacency representation $\bf1$,
\item (ii) there exists an adjacency basis $W$ of $H$ such that
there is no vertex with adjacency representation $\bf2$ with
respect to $W$,
\end{description} then
$\dim(G[H])=n\dim_2(H)+a(G)-\iota_{_K}(G).$
\end{thm}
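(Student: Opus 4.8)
The plan is to establish the equality by proving the two inequalities separately, working layer by layer. Throughout, for $S\subseteq V(G[H])$ and $v\in V(G)$ set $S_v=\{u\in V(H):(v,u)\in S\}$, and recall that in $G[H]$ (for $n\ge 2$, so that every vertex of $G$ has a neighbour) one has $d((v,u),(v',u'))=d_G(v,v')$ when $v\ne v'$ and $d((v,u),(v,u'))=a_H(u,u')$ when $v=v'$; recall also that $d_G(v,x)=d_G(v',x)$ for all $x\notin\{v,v'\}$ precisely when $v\equiv v'$. The one observation that drives the whole argument is that \emph{if $W$ is an adjacency resolving set of $H$ then $H$ has at most one vertex of adjacency representation ${\bf 1}$ with respect to $W$} (and, symmetrically, at most one of representation ${\bf 2}$), and any such vertex lies outside $W$, since a vertex of $W$ has a zero coordinate; I would state and prove this first and then invoke it in both directions.

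\textbf{Lower bound.} Let $S$ be a resolving set of $G[H]$. First, each $S_v$ is forced to be an adjacency resolving set of $H$: if $u\ne u'$ satisfied $r_2(u|S_v)=r_2(u'|S_v)$, then (this rules out $u,u'\in S_v$) the representations $r((v,u)|S)$ and $r((v,u')|S)$ would coincide — they agree on the coordinates indexed by $S_v$, and on every layer $v'\ne v$ both equal $d_G(v,v')$ — contradicting that $S$ resolves. Hence $|S_v|\ge\dim_2(H)$ for every $v$. Next, in any type (K) class $C$ at most one layer can have $|S_v|=\dim_2(H)$: if $v,v'\in C$ had $|S_v|=|S_{v'}|=\dim_2(H)$ then $S_v,S_{v'}$ are adjacency bases, so by (i) there are $u,u'$ with $r_2(u|S_v)={\bf 1}$ and $r_2(u'|S_{v'})={\bf 1}$, and since $v,v'$ are adjacent twins a direct check gives $r((v,u)|S)=r((v',u')|S)$ (the $S_v$-block is all $1$'s for $(v,u)$ and is $d_G(v',v)=1$ for $(v',u')$; symmetrically for the $S_{v'}$-block; the other layers agree because $v\equiv v'$), again a contradiction. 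Therefore $\sum_{v\in C}|S_v|\ge|C|\dim_2(H)+(|C|-1)$; summing this over all type (K) classes and $|S_v|\ge\dim_2(H)$ over the remaining layers gives $|S|\ge n\dim_2(H)+a(G)-\iota_{_K}(G)$.

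\textbf{Upper bound.} Here I would construct a resolving set of the stated size. By (ii) pick an adjacency basis $W$ of $H$ with no vertex of representation ${\bf 2}$; by (i) and the uniqueness observation, $W$ has a unique vertex $x$ of representation ${\bf 1}$, and $x\notin W$, so $W'=W\cup\{x\}$ is an adjacency resolving set of size $\dim_2(H)+1$ with \emph{no} vertex of representation ${\bf 1}$ (such a vertex would be all-ones for $W$ and distinct from $x$). Now single out one vertex in each type (K) class; in every layer that is either singled out or lies in no type (K) class place a copy of $W$, and in each remaining layer a copy of $W'$. Since there are $\sum_C(|C|-1)=a(G)-\iota_{_K}(G)$ layers of the latter kind, $|S|=n\dim_2(H)+a(G)-\iota_{_K}(G)$. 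That $S$ resolves $G[H]$ is then checked case by case: within a layer it is immediate because $S_v$ contains an adjacency basis; for $(v,u),(v',u')$ with $v\not\equiv v'$ some layer $x$ has $d_G(v,x)\ne d_G(v',x)$ and $S_x\ne\emptyset$; for $v\equiv v'$ of type (N), both layers carry $W$, whose no-${\bf 2}$ property yields a coordinate separating $(v,u)$ from $(v',u')$ (the latter being at distance $2$ from that layer); and for $v\equiv v'$ of type (K), at most one of the two layers carries $W$ while the other carries $W'$, whose lack of an all-ones vertex yields the separating coordinate (the other vertex being at distance $1$ from that layer).

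\textbf{Main obstacle.} The delicate point — and the reason the correction term is exactly $a(G)-\iota_{_K}(G)$ — is obtaining the enlarged set $W'$ that still adjacency-resolves $H$ but has no vertex of representation ${\bf 1}$; this is where the uniqueness of the all-ones vertex of an adjacency basis is indispensable, since it guarantees that one extra coordinate suffices to destroy that vertex. Everything else reduces to the routine bookkeeping of distances in a lexicographic product.
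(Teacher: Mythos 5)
Your proof is correct. Note, however, that this paper does not actually prove Theorem~\ref{nB2+a(G) K(G)}: it is quoted without proof from the cited reference~\cite{lexico}, so there is no in-paper argument to compare against. Your route (per-layer adjacency resolving sets plus the twin-class counting for the lower bound, and the construction alternating $W$ with $W\cup\{x\}$ for the upper bound, both hinging on the uniqueness of the all-ones vertex of an adjacency basis) is essentially the standard argument used in that reference.
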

\begin{thm}\label{nB2+b(G) N(G)}\rm\cite{lexico}
Let $G$ be a connected graph of order $n$ and $H$ be an arbitrary
graph. If $H$ has the following properties
\begin{description}\item (i) for each adjacency basis of $H$
there exists a vertex with adjacency representation  $\bf2$,
\item (ii) there exists an adjacency basis $W$ of $H$ such that
there is no vertex with adjacency representation $\bf1$ with
respect to $W$,
\end{description} then
$\dim(G[H])=n\dim_2(H)+b(G)-\iota_{_N}(G).$
\end{thm}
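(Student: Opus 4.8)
The plan is to prove the equality by establishing the two inequalities separately, in a manner dual to the proof of Theorem~\ref{nB2+a(G) K(G)} (exchanging the roles of the representations $\bf1$ and $\bf2$, and of the equivalence classes of type (K) and type (N)). Throughout, assume $n\geq 2$, and recall that in $G[H]$ one has $d_{G[H]}((v,u),(v',u'))=d_G(v,v')$ whenever $v\neq v'$, and $d_{G[H]}((v,u),(v,u'))=a_H(u,u')$. For a set $S\subseteq V(G)\times V(H)$ and $v\in V(G)$, put $S_v=\{u\in V(H):(v,u)\in S\}$, so that $|S|=\sum_{v\in V(G)}|S_v|$.

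For the lower bound, suppose $S$ resolves $G[H]$. Two vertices $(v,u),(v,u')$ of the same copy have equal distance to every landmark outside that copy, so $S_v$ must adjacency-resolve $V(H)$; hence $|S_v|\geq\dim_2(H)$ for every $v$. Now fix an equivalence class $v^*$ of $G$ of type (N). For distinct $v,v'\in v^*$ we have $d_G(v,v')=2$ and $d_G(v,w)=d_G(v',w)$ for all $w\notin\{v,v'\}$, so the pair $(v,u),(v',u')$ can be resolved only if $r_2(u\,|\,S_v)\neq\bf2$ or $r_2(u'\,|\,S_{v'})\neq\bf2$. If two vertices $v,v'$ of $v^*$ both satisfied $|S_v|=|S_{v'}|=\dim_2(H)$, then $S_v$ and $S_{v'}$ would be adjacency bases of $H$, each (by hypothesis~(i)) carrying a vertex whose adjacency representation is $\bf2$, and the associated pair would be unresolved --- a contradiction. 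Hence at most one vertex of $v^*$ has $|S_v|=\dim_2(H)$, and the remaining $|v^*|-1$ vertices satisfy $|S_v|\geq\dim_2(H)+1$. Summing over all classes of type (N) and over the remaining $n-b(G)$ vertices yields $|S|\geq n\dim_2(H)+b(G)-\iota_{_N}(G)$.

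For the upper bound, use hypothesis~(ii) to pick an adjacency basis $W$ of $H$ with no vertex whose representation is $\bf1$. Since $W$ adjacency-resolves $V(H)$ there is at most one vertex with representation $\bf2$ with respect to $W$, and by hypothesis~(i) there is exactly one; call it $z$ (note $z\notin W$). The crucial observation is that $W\cup\{z\}$ has no vertex with representation $\bf2$: any such vertex would already have representation $\bf2$ with respect to $W$, hence equal $z$, which is impossible since $z$ is itself a landmark of $W\cup\{z\}$. Define $S$ by: $S_v=W$ for every $v$ lying in a class of type (1) or type (K); and for each class $C$ of type (N), fix a vertex $v_0\in C$, set $S_{v_0}=W$, and set $S_v=W\cup\{z\}$ for the other $|C|-1$ vertices of $C$. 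A direct count gives $|S|=n\dim_2(H)+b(G)-\iota_{_N}(G)$, so it suffices to check that $S$ resolves $G[H]$. Vertices in a common copy are resolved since each $S_v\supseteq W$; vertices in copies $v,v'$ that are not twins of $G$ are resolved by a landmark lying in a copy $w$ with $d_G(v,w)\neq d_G(v',w)$ (every copy contains a landmark, as $W\neq\emptyset$); vertices in copies of a common class of type (K) are resolved because no copy there carries a $\bf1$-representation; finally, for $(v,u),(v',u')$ with $v,v'$ distinct vertices of a class $C$ of type (N), if $v\neq v_0$ a copy-$v$ landmark separates them since $S_v=W\cup\{z\}$ has no $\bf2$-representation, and if $v=v_0$ (so $v'\neq v_0$) a copy-$v'$ landmark separates them since $S_{v'}=W\cup\{z\}$ has no $\bf2$-representation while $d_G(v_0,v')=2$.

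The distance computations in $G[H]$ and the final case analysis are routine bookkeeping. The real content sits in the classes of type (N): hypothesis~(i) forces an additional landmark in all but one copy of each such class (lower bound), and hypothesis~(ii) makes exactly that bound attainable (upper bound), the decisive trick being that augmenting the $\bf1$-free adjacency basis $W$ by its unique $\bf2$-vertex $z$ destroys every $\bf2$-representation without creating obstructions elsewhere. I expect the main obstacle to be organizing the upper-bound verification so that the classes of type (K), the classes of type (N), and the distinguished copy $v_0$ are all handled uniformly in one argument.
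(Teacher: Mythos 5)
The paper does not prove this statement --- it is quoted from~\cite{lexico} with no argument given --- so there is no in-paper proof to compare against. Your blind proof is correct and complete, and it follows exactly the standard route of the cited source: the lower bound by showing hypothesis~(i) forces an extra landmark in all but one copy of each type-(N) twin class, and the upper bound by augmenting the $\bf1$-free adjacency basis $W$ with its unique $\bf2$-vertex $z$ in those copies.
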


\begin{cor}\label{no twin}\rm\cite{lexico} If $G$  has no  pair  of twin
vertices, then $\dim (G[H])=n\dim_2(H)$.
\end{cor}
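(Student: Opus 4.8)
The plan is to obtain this corollary as a pure case analysis off the four theorems just quoted. First I would unwind the hypothesis. If $G$ has no pair of twin vertices, then no two vertices of $G$ are related by ``$\equiv$'', so every equivalence class of $G$ is a singleton, i.e. of type (1); in particular there are no classes of type (K) and none of type (N). Consequently $a(G)=b(G)=0$ and $\iota_{_K}(G)=\iota_{_N}(G)=0$, and from the identity $\iota(G)=n(G)-a(G)-b(G)+\iota_{_N}(G)+\iota_{_K}(G)$ recorded in the introduction we obtain $\iota(G)=n$.

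Next I would observe that, for any fixed graph $H$, exactly one of the following four situations occurs: (a) there is an adjacency basis of $H$ with no vertex of adjacency representation $\bf1$ and an adjacency basis with no vertex of adjacency representation $\bf2$; (b) every adjacency basis of $H$ has a vertex of representation $\bf1$ and every adjacency basis has a vertex of representation $\bf2$; (c) every adjacency basis has a vertex of representation $\bf1$, but some adjacency basis has none of representation $\bf2$; (d) every adjacency basis has a vertex of representation $\bf2$, but some adjacency basis has none of representation $\bf1$. To see that (b) matches the hypothesis of Theorem \ref{thm generalG[H]}, note that ``every basis carries a $\bf1$ and every basis carries a $\bf2$'' is literally ``for each adjacency basis $W$ there exist vertices with representations $\bf1$ and $\bf2$ with respect to $W$''. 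Thus (a), (b), (c), (d) are precisely the hypotheses of Theorems \ref{B1 B2}, \ref{thm generalG[H]}, \ref{nB2+a(G) K(G)}, and \ref{nB2+b(G) N(G)}, in that order.

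Finally I would substitute the values from the first step into each theorem's conclusion. In case (a), Theorem \ref{B1 B2} already gives $\dim(G[H])=n\dim_2(H)$. In case (b), Theorem \ref{thm generalG[H]} gives $n(\dim_2(H)+1)-\iota(G)=n(\dim_2(H)+1)-n=n\dim_2(H)$. In case (c), Theorem \ref{nB2+a(G) K(G)} gives $n\dim_2(H)+a(G)-\iota_{_K}(G)=n\dim_2(H)$. In case (d), Theorem \ref{nB2+b(G) N(G)} gives $n\dim_2(H)+b(G)-\iota_{_N}(G)=n\dim_2(H)$. Hence $\dim(G[H])=n\dim_2(H)$ in all cases, as claimed.

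Since the substantive content is entirely carried by the four cited theorems, there is no genuine obstacle; the only points requiring care are verifying that the four hypotheses exhaust all possibilities for $H$ and that the reduction of ``$G$ has no twins'' to $\iota(G)=n$, $a(G)=b(G)=0$, $\iota_{_K}(G)=\iota_{_N}(G)=0$ is done correctly. One should also keep the standing assumption $n\ge 2$ under which those theorems operate, since for $n=1$ one has $K_1[H]=H$ and $\dim(H)\ne\dim_2(H)$ in general.
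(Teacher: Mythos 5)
Your proposal is correct and is essentially the intended derivation: the paper states this as a corollary of Theorems~\ref{B1 B2}, \ref{thm generalG[H]}, \ref{nB2+a(G) K(G)} and \ref{nB2+b(G) N(G)} (citing \cite{lexico} without reproducing the argument), and your case analysis on which of the two properties of adjacency bases of $H$ hold, combined with $a(G)=b(G)=\iota_{_K}(G)=\iota_{_N}(G)=0$ and $\iota(G)=n$ for a twin-free $G$, is exactly that deduction. The added caveat about $n\ge 2$ is a sensible precaution but does not change the substance.
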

\section{Main results}\label{results}
In this section we investigate metric dimension of the lexicographic product of graphs for some families of graphs.
Theorems~\ref{B1 B2}, \ref{thm generalG[H]}, \ref{nB2+a(G) K(G)} and \ref{nB2+b(G) N(G)} imply that to find the exact value of
$\dim(G[H])$, we need to know all twin vertices in $G$ and adjacency resolving sets for $H$. 

By Corollary~\ref{no twin} to compute the $\dim(G[H])$, where $G$
has no any pair of twin vertices,  it is enough to obtain the
value of $\dim_2(H)$.

\begin{lemma}\label{keneser does not have twins}
If $KG(k,r)$, $k\geq 2r+1$ be the Kneser graph, then $G$  have no
any pair of twin vertices.
\end{lemma}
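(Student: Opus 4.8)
The plan is to argue by contradiction. Write the vertices of $KG(k,r)$ as the $r$-subsets of $[k]=\{1,\ldots,k\}$, with $A$ adjacent to $C$ exactly when $A\cap C=\emptyset$; thus a neighbour of $A$ is precisely an $r$-set disjoint from $A$. Suppose $A,B$ are distinct $r$-sets that are twins, i.e. $N(A)\setminus\{B\}=N(B)\setminus\{A\}$. I will produce an $r$-set $C$, different from both $A$ and $B$, that is a neighbour of exactly one of $A$ and $B$; this contradicts the displayed equality. There are two cases, according to whether $A$ and $B$ are adjacent.

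Suppose first that $A\cap B\neq\emptyset$, so $A$ and $B$ are non-adjacent and the twin condition becomes $N(A)=N(B)$. Since $|A|=|B|=r$ and $A\neq B$, there is an element $b\in B\setminus A$. As $k\geq 2r+1$, the complement $[k]\setminus A$ has $k-r\geq r+1$ elements, so there is an $r$-set $C$ with $b\in C\subseteq[k]\setminus A$. Then $C\cap A=\emptyset$, so $C\in N(A)$, while $b\in C\cap B$, so $C\notin N(B)$; moreover $C\neq B$ because $C\subseteq[k]\setminus A$ whereas $A\cap B\neq\emptyset$. This contradicts $N(A)=N(B)$.

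Suppose now that $A\cap B=\emptyset$; I also assume $r\geq 2$, since $KG(k,1)$ is just $K_k$. Then $|A\cup B|=2r\leq k-1$, so we may fix $y\in[k]\setminus(A\cup B)$ and also fix $b\in B$. Both $b$ and $y$ lie in the $(k-r)$-element set $[k]\setminus A$, and since $(k-r)-2\geq r-2$ we can extend $\{b,y\}$ to an $r$-set $C$ with $\{b,y\}\subseteq C\subseteq[k]\setminus A$. Now $C\cap A=\emptyset$ gives $C\in N(A)$, $b\in C\cap B$ gives $C\notin N(B)$, and $y\in C\setminus B$ gives $C\neq B$. Hence $C\in N(A)\setminus\{B\}$ but $C\notin N(B)\setminus\{A\}$, again a contradiction.

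The only point needing care is the adjacent case: an $r$-set disjoint from $A$ and meeting $B$ might be $B$ itself, so the argument must force $C\neq B$, and this is exactly where the strict inequality $k\geq 2r+1$ — which supplies the ``spare'' element $y$ outside $A\cup B$, together with room to include it once $r\geq 2$ — is essential. (For $k=2r$ the graph $KG(2r,r)$ is a perfect matching and for $r=1$ it is $K_k$, both of which do have twins, so these cases are genuinely excluded.) Beyond this, everything is elementary counting and no structural property of Kneser graphs is used.
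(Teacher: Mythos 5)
Your proof is correct and follows essentially the same strategy as the paper's: both argue by contradiction and explicitly construct an $r$-set disjoint from one of the twins but meeting the other, using the slack $k\geq 2r+1$ (the paper perturbs a common non-neighbor... rather, a common neighbor $C$ of $A$ by swapping $y\in C$ for $x\in A\setminus B$, while you build the distinguishing set directly). If anything, your version is more careful than the paper's, since you verify explicitly that the distinguishing set differs from $B$ and you flag the degenerate case $r=1$ (where $KG(k,1)=K_k$ does have twins), a point the paper's one-line construction silently passes over.
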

\begin{proof}{ If $A$ and $B$ are distinct twin vertices in $G$, then $A\cap
C=\emptyset$ if and only if $B\cap C=\emptyset$, for each $C\in
V(G)$. Now let $C\in V(G)\backslash\{A,B\}$, $A\cap C=\emptyset$,
$x\in A\backslash B$, and $y\in C$. Let $D=C\cup
\{x\}\backslash\{y\}$. Therefore, $A\cap D\neq \emptyset$ and
$B\cap D=\emptyset$, which is a contradiction. }\end{proof}

 Note that the line graph  of  the complete graph $K_n$
is the complement of $KG(n,2)$. Since all twin vertices of a graph
are twins in its complement, as well;  by Lemma~\ref{keneser does
not have twins}, $L(K_n)$, $n\geq5$,   have  no any pair of twin
vertices. Also,  the path $P_n$,  $\overline P_n$, $n\geq 4$, and
the cycle $C_n$,  $\overline C_n$, $n\geq 5$,   have  no any pair
of twin vertices. Thus, by Theorems~\ref{thm generalG[H]}
 the exact values when  $G\in\{\overline
P_n~(n\geq4),\overline C_n(n\geq5),L(K_n)~(n\geq5),K(k,r)\}$ and
 $H\in\{P_m,C_m,\overline P_m,\overline C_m,K_m,\overline
K_m,P,K_{m_1,\ldots,m_t}\}$ are obtained.

To study the adjacency basis of a graph $H$, we need the following
definitions. Let $S$ be a subset of  vertices of $H$, where
$|s|\geq 2$. The set of vertices of a nonempty connected component
of the induced subgraph by $V(H)\backslash S$ of $H$ is called a
{\it gap} of $H$. This definition agrees with the one in~\cite{K
dimensional} which is given  for  the cycle $C_m$.  If $Q_1,Q_2$
are two gaps of $S$ for which there exists a vertex $x\in S$ such
that the induced subgraph by $Q_1\cup Q_2\cup\{x\}$ is connected,
then $Q_1$ and $Q_2$ are called {\it neighboring gaps}.
In~\cite{K dimensional}, the following observation is expressed
for the gaps of a basis of $C_m\vee K_1$. Particularly,  it is
true for an adjacency basis of $C_m$.
\begin{obs}\label{obs C_m}
 If $B$ is an adjacency basis of $C_m$,
then
\begin{description}
\item (1) Every gap of $B$ contains at most three vertices.
 \item  (2) At most one gap of $B$ contains three vertices.
 \item (3) If a gap  of $B$ contains at least two vertices, then any neighboring
gaps of which contain one vertex.
\end{description}
\end{obs}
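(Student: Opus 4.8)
The plan is to exploit the fact that, because distances are capped at $2$ in adjacency representations, the vector $r_2(v|B)$ of a vertex $v\notin B$ is determined solely by which vertices of $B$ are neighbours of $v$ on the cycle: the coordinate at $w\in B$ equals $1$ when $w\in N(v)$ and equals $2$ otherwise. Hence two vertices outside $B$ receive the same adjacency representation exactly when they have the same set of neighbours in $B$, and in particular any vertex both of whose cycle-neighbours avoid $B$ has representation ${\bf 2}$; two such vertices would then collide, contradicting that $B$ is an adjacency resolving set. I would record this remark first and then apply it three times, once for each part.

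For part (1) I would argue that if some gap of $B$ contained four consecutive vertices $x_1x_2x_3x_4$, then both $x_2$ and $x_3$ have their two cycle-neighbours inside the gap, so $r_2(x_2|B)=r_2(x_3|B)={\bf 2}$ --- impossible; thus every gap has at most three vertices. For part (2), if two distinct gaps each had three vertices, the middle vertex of each would again have representation ${\bf 2}$, producing a collision, so at most one gap is of size three. For part (3), I would take a gap $Q$ with $|Q|\ge 2$ together with a neighbouring gap $Q'$; by definition there is a single vertex $x\in B$ such that $Q$, $x$, $Q'$ occur consecutively along the cycle. Let $b$ be the endpoint of $Q$ adjacent to $x$: since $|Q|\ge 2$, the other neighbour of $b$ lies in $Q$, so $N(b)\cap B=\{x\}$ and $r_2(b|B)$ has a $1$ in the coordinate of $x$ and $2$ everywhere else. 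If $|Q'|\ge 2$ as well, the endpoint $c$ of $Q'$ adjacent to $x$ has the identical representation by the same argument, and $b\neq c$ yields a contradiction; hence $|Q'|=1$.

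I do not expect a genuine obstacle; the only point needing care is the behaviour for small $m$ --- one wants $B$ to actually split the cycle into gaps and a ``neighbouring gap'' of $Q$ to be understood as a gap different from $Q$. Both are harmless: Proposition~\ref{characterization 1, m-1} gives $\dim_2(C_m)\ge 2$ for every $m\ge 3$, so $|B|\ge 2$ and the cycle decomposes into genuine gaps between the vertices of $B$, and with the convention that neighbouring gaps are distinct, each of (1)--(3) is precisely one of the pigeonhole collisions described above.
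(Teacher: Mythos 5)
Your proof is correct. Note, however, that the paper does not actually prove this observation: it is quoted from the reference on $k$-dimensional graphs, where the analogous statement is established for a basis of $C_m\vee K_1$, and the paper merely remarks that it carries over to adjacency bases of $C_m$. So there is no in-paper argument to compare against; what you have written is a self-contained verification, and it uses exactly the mechanism that makes the transfer from $C_m\vee K_1$ to adjacency bases work, namely that all distances are truncated at $2$, so the representation of a vertex outside $B$ is determined by its neighbourhood in $B$. Your three pigeonhole collisions (two internal gap vertices with representation $\bf2$ for parts (1) and (2), and two endpoints seeing only the shared basis vertex $x$ for part (3)) are all sound, and you correctly handle the only delicate points: $|B|\ge 2$ follows from the characterization of graphs with adjacency dimension $1$, and neighbouring gaps are by definition two distinct gaps. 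No gap to report.
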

It is easy to see the following observation for $P_m$.
\begin{obs}\label{obs P_m}
 Let $B$ be an adjacency basis of $P_m=(w_1,w_2,\ldots,w_m)$. If $R_1$
 and $R_2$ are  gaps of $P_n$ with $w_1\in R_1$ and
 $w_m\in R_2$,
then
\begin{description}
\item (1) Every gap of $B$ contains at most three vertices and
$|R_i|\leq2$, where $1\leq i\leq2$.
 \item (2) At most one gap of
$B$ contains three vertices and at most one of the gaps $R_1$ and
$R_2$  contains two vertices.
 \item (3) If $|R_i|=2$ for some
$i,~1\leq i\leq2$, then all gaps of $B$  contains at most two
vertices.
\item (4) If a gap of $B$ contains at least two
vertices, then any neighboring gaps of which is neither $R_1$ nor
$R_2$ and contain one vertex.
\end{description}

\end{obs}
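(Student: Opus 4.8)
The plan is to exploit the fact that in $P_m$ every gap of $B$ is an induced subpath, so a vertex lying strictly between the two ends of a gap has both of its neighbours inside that gap and is therefore non-adjacent to every vertex of $B$; call such a vertex \emph{free}. Since $B$ adjacency-resolves $P_m$ and every free vertex has adjacency representation $\mathbf{2}$, there is at most one free vertex. I will also use one further consequence of the resolving property: two distinct vertices cannot both be adjacent to exactly one vertex $x\in B$ and to no other vertex of $B$, since their adjacency representations would then both be the vector with a $1$ in the coordinate of $x$ and a $2$ in every other coordinate. (It suffices to treat $m\ge 4$, where $|B|=\dim_2(P_m)\ge 2$; the cases $m\le 3$ are immediate.)

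First I would determine, for a gap written as the subpath $Q=(q_1,\dots,q_t)$, which of its vertices are free. If $Q$ is \emph{interior}, meaning both $q_1$ and $q_t$ have a neighbour in $B$, then $q_1,q_t$ are non-free while $q_2,\dots,q_{t-1}$ are free, so $Q$ contributes $t-2$ free vertices. If $Q=R_1=(w_1,\dots,w_s)$ with $w_{s+1}\in B$, then $w_s$ is non-free while $w_1,\dots,w_{s-1}$ are free (the endpoint $w_1$ has only the neighbour $w_2$), so $R_1$ contributes $s-1$ free vertices; $R_2$ is symmetric. Since there is at most one free vertex in all, $t-2\le 1$ for an interior gap and $s-1\le 1$ for $R_i$, which gives part~(1). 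For part~(2): a gap with three vertices is necessarily interior by part~(1) and supplies a free vertex, and each $R_i$ of size $2$ supplies a free vertex, so at most one gap has three vertices and at most one of $R_1,R_2$ has two vertices. For part~(3): if some $R_i$ has two vertices it already accounts for the unique free vertex, so no gap can have three vertices, i.e.\ every gap has at most two vertices.

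For part~(4), let $Q$ be a gap with $|Q|\ge 2$ and let $Q'$ be a gap neighbouring $Q$ through a vertex $x\in B$, so that $Q\cup\{x\}\cup Q'$ induces a subpath; orient $P_m$ so that $Q$ precedes $x$ and $x$ precedes $Q'$, and write $q$ for the last vertex of $Q$ and $q'$ for the first vertex of $Q'$. Then $q$ is adjacent to $x$, while its other neighbour lies in $Q$ (because $|Q|\ge 2$) and hence outside $B$, so $q$ is adjacent to exactly the vertex $x$ among $B$. If $|Q'|\ge 2$, the same reasoning applied to $q'$ shows $q'$ is adjacent to exactly $x$ among $B$, whence $r_2(q|B)=r_2(q'|B)$ with $q\ne q'$, contradicting the resolving property; so $|Q'|=1$. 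Since $Q'$ lies after $x$ we get $Q'\ne R_1$; and if $Q'=R_2=\{w_m\}$ then the only neighbour of $w_m$ is $w_{m-1}$, which must be $x$, so $r_2(w_m|B)=r_2(q|B)$, again a contradiction. The reversed orientation of $Q$ relative to $Q'$ is handled by interchanging $R_1$ and $R_2$; this whole argument mirrors the treatment of gaps for $C_m$ in Observation~\ref{obs C_m}.

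I do not expect a genuine obstacle: the proof is just a count of free vertices together with a single use of the fact that two distinct vertices cannot both have exactly one and the same neighbour in $B$ (and no other). The only delicate bookkeeping is keeping the end-gaps $R_1,R_2$ distinct from the interior gaps — their outermost vertex $w_1$ or $w_m$ has a single neighbour, which shifts the free-vertex count by one — and, in part~(4), handling the two orientations of $Q$ and the case where the one-vertex gap $Q'$ is itself an end-gap.
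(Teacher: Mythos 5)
Your proof is correct. The paper offers no argument for this observation (it is stated with ``It is easy to see\dots''), so there is nothing to compare against; your count of ``free'' vertices (those with adjacency representation $\mathbf{2}$, of which there can be at most one) together with the remark that two gap vertices cannot both be adjacent to exactly the same single vertex of $B$ is exactly the natural argument the paper is implicitly relying on, and your bookkeeping for the end-gaps $R_1,R_2$ and for part~(4) is complete.
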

\begin{pro}\label{B(G[pm,cm])}
Let $G$ be a connected graph of order $n$ and $H\in\{P_m,C_m\}$,
$m=5k+r$, where $m\notin\{2,3\}$.
\begin{description}
 \item(i) If $r$ is  even, then
$\dim(G[H])=\dim(G[\overline H])=n\lfloor{2m+2\over5}\rfloor$.
\item (ii) If $m=6$, then $\dim(G[H])=\dim(G[\overline
H])=n\lfloor{2m+2\over5}\rfloor+a(G)+b(G)-\iota_{_K}(G)-\iota_{_N}(G)$.
\item
(iii) If  $r$ is  odd and $m\neq6$, then
$\dim(G[H])=n\lfloor{2m+2\over5}\rfloor+b(G)-\iota_{_N}(G)$ and
$\dim(G[\overline
H])=n\lfloor{2m+2\over5}\rfloor+a(G)-\iota_{_K}(G)$.
\end{description}
\end{pro}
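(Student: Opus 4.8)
The plan is, for each adjacency basis $W$ of $H\in\{P_m,C_m\}$, to decide whether $H$ has a vertex whose adjacency representation with respect to $W$ equals ${\bf 1}$ or ${\bf 2}$, and then to invoke Theorems~\ref{B1 B2}, \ref{thm generalG[H]}, \ref{nB2+a(G) K(G)} and \ref{nB2+b(G) N(G)}. Put $t:=\dim_2(H)=\lfloor\frac{2m+2}{5}\rfloor$ by Proposition~\ref{B_2(P_m),B_2(C_m)}; note $t\ge 3$ exactly when $m\ge 7$. Two elementary remarks drive everything. (a) If $r_2(v|W)={\bf 1}$ then $W\subseteq N_H(v)$, so $|W|\le\deg_H(v)\le 2$; hence for $m\ge 7$ no adjacency basis of $H$ admits a vertex with representation ${\bf 1}$. (b) For $v\notin W$ we have $r_2(v|W)={\bf 2}$ iff $N_H(v)\cap W=\emptyset$, which by Observations~\ref{obs C_m} and \ref{obs P_m} happens iff $v$ is the middle vertex of a gap of size $3$, or (only in $P_m$) an endpoint contained in an end-gap of size $2$.

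The combinatorial core is the claim: \emph{$H$ has an adjacency basis with no vertex of representation ${\bf 2}$ if and only if $r$ is even.} By (b) such a basis is exactly an adjacency basis all of whose gaps have size $\le 2$, and (for $P_m$) with $|R_1|,|R_2|\le 1$. Record $W$ by the cyclic sequence of the sizes of the $t$ arcs cut off on $C_m$ (respectively, by the $t+1$ segment sizes on $P_m$); by Observations~\ref{obs C_m}, \ref{obs P_m} no two neighbouring gaps both have size $\ge 2$. A short optimisation then shows that when no gap of size $3$ (and, for $P_m$, no end-gap of size $2$) is allowed, the total of these sizes is at most $\lfloor\frac{3t}{2}\rfloor$; on the other hand every adjacency basis of $H$ has this total equal to $m-t$. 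Checking the five residues of $m$ modulo $5$ yields $m-t\le\lfloor\frac{3t}{2}\rfloor$ exactly when $r\in\{0,2,4\}$; and when $r$ is even I exhibit an explicit arc-pattern of alternating $2$'s and $1$'s (slightly corrected near the ends for $P_m$) which has sum $m-t$ and, by the gap criterion, is an adjacency resolving set of minimum cardinality $t$ and so an adjacency basis of $H$ with no vertex of representation ${\bf 2}$. This proves the claim.

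The proof then splits by residue. If $r$ is even, then either $m\in\{4,5\}$, handled by exhibiting appropriate bases directly, or $m\ge 7$, in which case the basis $W$ from the claim has neither a vertex of representation ${\bf 1}$ (by (a)) nor one of representation ${\bf 2}$; taking $W_1=W_2=W$ in Theorem~\ref{B1 B2} gives $\dim(G[H])=\dim(G[\overline H])=nt$, which is (i). If $r$ is odd, then $m\notin\{2,3\}$ leaves $m=6$ or $m\ge 8$. For $m=6$ one checks that the only admissible arc-patterns are $(3,1)$ for $C_6$ and $(2,1,1),(1,1,2)$ for $P_6$, so every adjacency basis of $H$ has both a vertex of representation ${\bf 1}$ and one of representation ${\bf 2}$; Theorem~\ref{thm generalG[H]} then yields $\dim(G[H])=\dim(G[\overline H])=n(t+1)-\iota(G)$, which equals the value in (ii) after substituting $\iota(G)=n-a(G)-b(G)+\iota_{_N}(G)+\iota_{_K}(G)$.

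Finally suppose $r$ is odd and $m\ge 8$, so $t\ge 3$. By (a) no adjacency basis of $H$ has a vertex of representation ${\bf 1}$, while by the claim every adjacency basis of $H$ has a vertex of representation ${\bf 2}$; Theorem~\ref{nB2+b(G) N(G)} gives $\dim(G[H])=nt+b(G)-\iota_{_N}(G)$. For $G[\overline H]$, note that for $u\ne v$ one has $a_{\overline H}(u,v)=3-a_H(u,v)$, so the adjacency resolving sets of $\overline H$ coincide with those of $H$ while the representations ${\bf 1}$ and ${\bf 2}$ swap; hence every adjacency basis of $\overline H$ has a vertex of representation ${\bf 1}$ and some has no vertex of representation ${\bf 2}$, so Theorem~\ref{nB2+a(G) K(G)} together with $\dim_2(\overline H)=\dim_2(H)$ (Proposition~\ref{B2(H)=B2(overlineH)}) gives $\dim(G[\overline H])=nt+a(G)-\iota_{_K}(G)$; this completes (iii). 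I expect the main obstacle to be the ``only if'' half of the combinatorial claim — that a gap of size $3$ (or a size-$2$ end-gap) is genuinely forced when $r$ is odd — which rests on the sharp bound $\lfloor\frac{3t}{2}\rfloor$ matched against the exact value of $\dim_2$, together with the bookkeeping for the path's end-gaps and the small orders $m\in\{4,5,6\}$.
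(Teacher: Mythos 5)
Your proposal is correct and follows essentially the same route as the paper: the same case split on $m\in\{4,5\},\ m=6,\ m\ge 7$ with $r$ even or odd, the same use of Theorems~\ref{B1 B2}, \ref{thm generalG[H]}, \ref{nB2+a(G) K(G)}, \ref{nB2+b(G) N(G)}, the degree bound ruling out representation $\bf1$ for $m\ge 7$, explicit alternating bases for $r$ even, and the gap Observations~\ref{obs C_m}, \ref{obs P_m} plus complementation for the odd case. The only difference is that you make explicit the counting step (comparing $m-\dim_2(H)$ with $\lfloor 3\dim_2(H)/2\rfloor$) that the paper leaves implicit in its appeal to the observations.
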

 \begin{proof}{Let $P_m=(w_1,w_2,\ldots,w_m)$ and $C_m=(w_1,w_2,\ldots,w_m,w_1)$.
 If $m=4$, then the set $B_4=\{w_1,w_4\}\subseteq V(H)$ is an
 adjacency basis of $H$ and $r_2(w_i|B_4)$  is neither  $\bf1$ nor
$\bf2$, for each $i$, $1\leq i\leq 4$. Therefore, by
Proposition~\ref{B_2(P_m),B_2(C_m)} and Theorem~\ref{B1 B2},
$\dim(G[H])=\dim(G[\overline H])=n\lfloor{2m+2\over5}\rfloor$.
If $m=5$, then the sets $B_1=\{w_1,w_5\}$ and $B_1=\{w_2,w_4\}$
are adjacency bases of $H$ and for each $i$, $1\leq i\leq5$,
$r_2(w_i|B_1)$ is not  $\bf1$ and $r_2(w_i|B_2)$ is not
 $\bf2$. Hence, by Lemma~\ref{B_2(P_m),B_2(C_m)} and
Theorem~\ref{B1 B2}, $\dim(G[H])=\dim(G[\overline
H])=n\lfloor{2m+2\over5}\rfloor$. If $m=6$, then it is easy to
check that for each adjacency basis $A$ of $H$ there exist
vertices $x_{_A},y_{_A}\in V(H)$ such that $x_{_A}\sim w$ and
$y_{_A}\nsim w$ for each $w\in A$. Consequently, by
Theorem~\ref{thm generalG[H]}, $\dim(G[H])=\dim(G[\overline
H])=n\lfloor{2m+2\over5}\rfloor+a(G)+b(G)-\iota_{_K}(G)-\iota_{_N}(G)$.
\par Now, let $m\geq 7$. By Proposition~\ref{B_2(P_m),B_2(C_m)},
$\dim_2(H)\geq 3$. Let $B$ be  an adjacency basis of $H$. Since
each vertex of $H$ has at most two neighbors, $r_2(w|B)$ is not
 $\bf1$, for each $w\in V(H)$. If $r$ is even, then, let
$S_0=\{w_{5q+2},w_{5q+4}|~0\leq q\leq k-1\}$,
$S_2=S\cup\{w_{5k+1}\}$, and $S_4=S\cup\{w_{5k+1},w_{5k+3}\}$.
Thus, the set $S_t$, is an adjacency basis of $H$ when $r=t$,
$t\in\{0,2,4\}$. Also, $r_2(w|S_t)$ is neither $\bf1$ nor
 $\bf2$, for each $w\in V(H)$ and $t\in\{0,2,4\}$. Hence, by
by Lemma~\ref{B_2(P_m),B_2(C_m)} and Theorem~\ref{B1 B2},
$\dim(G[H])=\dim(G[\overline H])=n\lfloor{2m+2\over5}\rfloor$.
\par If $r$ is odd, then Observations~\ref{obs C_m} and \ref{obs
P_m} imply that for each adjacency basis $A$ of $H$ there exists a
vertex $y_{_A}\in V(H)$ such that $y_{_A}\nsim w$ for each $w\in
A$. Therefore, by Theorem~\ref{nB2+b(G) N(G)},
$\dim(G[H])=n\lfloor{2m+2\over5}\rfloor+b(G)-\iota_{_N}(G)$.
Since the adjacency bases of $H$ and $\overline H$ are the same,
for each adjacency basis $Q$ of $\overline H$ there exists a
vertex $x_{_Q}\in V(\overline H)$ such that $x_{_Q}\sim u$ for
each $u\in Q$. Hence, by Theorem~\ref{nB2+a(G) K(G)},
$\dim(G[\overline
H])=n\lfloor{2m+2\over5}\rfloor+a(G)-\iota_{_K}(G)$. }\end{proof}
\begin{cor}\label{B(K_n[p_m,c_m]} Let $m=5k+r$. If $H\in\{P_m,C_m\}$, then for
all $n\geq 2$, \begin{description}\item (1)
$\dim({K_n[H]})=\left\{
\begin{array}{ll}
2n-1 &  ~{\rm if}~H=P_2~{\rm or}~H=P_3, \\
3n-1 &  ~{\rm if}~H\in\{C_3,P_6,C_6\},\\
n\lfloor{2m+2\over5}\rfloor &   ~{\rm otherwise}.
\end{array}\right.$
\item (2) $\dim({P_n[H]})=\left\{
\begin{array}{ll}
5 &  ~{\rm if}~n=2~{\rm and}~H=C_3, \\
2n &  ~{\rm if}~n\neq2~{\rm and}~H=C_3,\\
n\lfloor{2m+2\over5}\rfloor+1 &  ~{\rm if}~n=2~{\rm and}~H\in\{P_2,P_3,P_6,C_6\},\\
n\lfloor{2m+2\over5}\rfloor+1 &  ~{\rm if}~n=3,~r~{\rm is ~odd,}~{\rm and}~H\neq~C_3,\\
n\lfloor{2m+2\over5}\rfloor &   ~{\rm otherwise}.
\end{array}\right.$
\item (3) $\dim({C_n[H]})=\left\{
\begin{array}{ll}
8 &  ~{\rm if}~n=3~{\rm and}~H=C_3, \\
2n &  ~{\rm if}~n\neq3~{\rm and}~H=C_3,\\
n\lfloor{2m+2\over5}\rfloor+2 &  ~{\rm if}~n=3~{\rm and}~H\in\{P_2,P_3,P_6,C_6\},\\
n\lfloor{2m+2\over5}\rfloor+2 &  ~{\rm if}~n=4,~r~{\rm is ~odd,}~{\rm and}~H\neq~C_3,\\
n\lfloor{2m+2\over5}\rfloor &   ~{\rm otherwise}.
\end{array}\right.$
\item (4) $\dim({K_{n_1,n_2,\ldots,n_t}[H]})=\left\{
\begin{array}{ll}
n\lfloor{2m+2\over5}\rfloor+t-j-1 &  ~{\rm if}~H=P_2~{\rm and}~j\neq t, \\
n(m-1)+t-j-1 &  ~{\rm if}~H=C_3~{\rm and}~j\neq t,\\
n(m-1) &  ~{\rm if}~H=C_3~{\rm and}~j=t,\\
n\lfloor{2m+2\over5}\rfloor+n-j-1 &   ~{\rm if}~H\in\{P_3,P_6,C_6\}~{\rm and}~j\neq t,\\
n\lfloor{2m+2\over5}\rfloor+n-t &   ~{\rm if}~H\in\{P_3,P_6,C_6\}~{\rm and}~j=t,\\
n\lfloor{2m+2\over5}\rfloor+n-t &   ~{\rm if}~m\geq7~{\rm and}~r~{\rm is~odd},\\
n\lfloor{2m+2\over5}\rfloor &   ~{\rm otherwise}.
\end{array}\right.$
\end{description}where $n_1, n_2,\ldots,n_j$ are at least $2$, $n_{j+1}=\ldots=n_t=1$, and
$\sum_{i=1}^tn_i=n$.
\end{cor}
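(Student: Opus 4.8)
The plan is to specialize Proposition~\ref{B(G[pm,cm])} to each of the four base graphs in turn, and to treat the three values $H\in\{P_2,P_3,C_3\}$ — which lie outside the range $m\notin\{2,3\}$ of that proposition — directly via Theorems~\ref{thm generalG[H]} and~\ref{nB2+a(G) K(G)}. Once the relevant twin-parameters $a(G),b(G),\iota_{_K}(G),\iota_{_N}(G),\iota(G)$ of each base graph are in hand, the whole corollary is a substitution followed by a short arithmetic simplification.

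First I would compute the twin data. For $K_n$ ($n\ge2$) all vertices form one class of type (K), so $a(K_n)=n$, $b(K_n)=0$, $\iota_{_K}(K_n)=1$, $\iota_{_N}(K_n)=0$, $\iota(K_n)=1$. For $P_n$ ($n\ge4$) and $C_n$ ($n\ge5$) there are no twins, as already noted, so $a=b=\iota_{_K}=\iota_{_N}=0$ and $\iota=n$; the small instances $P_2=K_2$, $P_3$ (whose two endpoints are non-adjacent twins), $C_3=K_3$ and $C_4=K_{2,2}$ (whose opposite vertices are non-adjacent twins) are handled separately by reading off their parameters directly. For $G=K_{n_1,\ldots,n_t}$ with $n_1,\ldots,n_j\ge2$ and $n_{j+1}=\cdots=n_t=1$, each part of size $\ge2$ is a class of type (N) while the $t-j$ singleton parts together form one class of type (K) when $t-j\ge2$ (and a class of type (1) when $t-j=1$); hence $b(G)=n-(t-j)$, $\iota_{_N}(G)=j$, and $a(G)=t-j$, $\iota_{_K}(G)=1$ if $t-j\ge2$ while $a(G)=\iota_{_K}(G)=0$ otherwise, so that $\iota(G)=j+1$ if $j<t$ and $\iota(G)=t$ if $j=t$.

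For the three exceptional $H$, Proposition~\ref{characterization 1, m-1} gives $\dim_2(P_2)=\dim_2(P_3)=1$ and $\dim_2(C_3)=\dim_2(K_3)=2$. For $H=P_2$ every adjacency basis is a single vertex whose complement has adjacency representation $\bf1$ and no vertex ever has representation $\bf2$, so Theorem~\ref{nB2+a(G) K(G)} yields $\dim(G[P_2])=n+a(G)-\iota_{_K}(G)$; the same reasoning with $\dim_2=2$ gives $\dim(G[C_3])=2n+a(G)-\iota_{_K}(G)$; and for $H=P_3$ the only adjacency bases are the two endpoints, each producing vertices with both representations $\bf1$ and $\bf2$, so Theorem~\ref{thm generalG[H]} gives $\dim(G[P_3])=2n-\iota(G)$. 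For $H=P_m$ or $C_m$ with $m\notin\{2,3\}$ the value is read straight off Proposition~\ref{B(G[pm,cm])}: it is $n\lfloor{2m+2\over5}\rfloor$ when $r$ is even, $n\lfloor{2m+2\over5}\rfloor+a(G)+b(G)-\iota_{_K}(G)-\iota_{_N}(G)$ when $m=6$, and $n\lfloor{2m+2\over5}\rfloor+b(G)-\iota_{_N}(G)$ when $r$ is odd and $m\ne6$. Substituting the twin data into these expressions gives the four parts of the corollary; for the multipartite base, for example, one checks that $b(G)-\iota_{_N}(G)=n-t$ always, that $a(G)+b(G)-\iota_{_K}(G)-\iota_{_N}(G)$ equals $n-j-1$ when $j<t$ and $n-t$ when $j=t$, that $a(G)-\iota_{_K}(G)$ equals $t-j-1$ when $j<t$ (which is also correct when $t-j=1$, both sides being $0$) and $0$ when $j=t$, and that $2n-\iota(G)$ equals $2n-j-1$ or $2n-t$ accordingly, so that after inserting $\lfloor{2m+2\over5}\rfloor$ for the small $m$ involved the stated formulas result.

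There is no conceptual difficulty here; the work is entirely organizational, and the places most likely to cause slips are the degenerate small graphs. As the second factor, $P_2,P_3,C_3$ must be handled outside Proposition~\ref{B(G[pm,cm])}, through the general theorems with their adjacency dimensions computed by hand; as the first factor, $P_2,P_3,C_3,C_4$ no longer satisfy the ``no twins'' property that holds for larger paths and cycles, so their twin-parameters are nonzero and must not be overlooked. The final check is that the boundary sub-cases — in particular $t-j=1$ for the multipartite base — collapse to exactly the closed forms written in the statement.
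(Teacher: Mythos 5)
Your proposal is correct: the twin parameters you list all check out ($a(K_n)=n$, $\iota_{_K}(K_n)=1$; $b(P_3)=2$, $\iota_{_N}(P_3)=1$; $b(C_4)=4$, $\iota_{_N}(C_4)=2$; for $K_{n_1,\ldots,n_t}$ one gets $b-\iota_{_N}=n-t$ always and $a-\iota_{_K}=t-j-1$ whenever $j<t$, including the degenerate case $t-j=1$), and substituting them into Proposition~\ref{B(G[pm,cm])}, Theorem~\ref{thm generalG[H]} (for $H=P_3$) and Theorem~\ref{nB2+a(G) K(G)} (for $H=P_2$ and $H=C_3$) reproduces every entry of the four displayed formulas. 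Your route agrees with the paper's for the generic values of $m$, but diverges on the subcase $G=K_n$, $H\in\{P_3,P_6,C_6\}$: there the paper does not simply cite the general theorems, but proves the lower bound $\dim(K_n[H])\ge n\dim_2(H)+n-1$ from scratch --- arguing that at most one of the $n$ copies of $H$ can meet a basis in only $\dim_2(H)$ vertices, since two such copies would each contain a vertex whose representation is $\bf1$ on its own copy and $\bf1$ on the rest of the basis --- and then exhibits explicit resolving sets of matching size, e.g.\ $\{v_{rs}\mid s\neq 3\}\setminus\{v_{12}\}$ for $K_n[P_3]$. Your pure substitution argument is shorter and, unlike the paper's written proof (which treats only part (1), for $G=K_n$), it handles all four base graphs uniformly, including the small exceptional bases $P_2,P_3,C_3,C_4$ whose nonzero twin parameters are exactly what generate the $+1$ and $+2$ corrections in parts (2) and (3); the paper's hands-on argument buys a self-contained verification and concrete bases. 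Both are valid, and your final formulas match the statement in every case.
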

\begin{proof}{ Since $K_n$ does not have any pair of none-adjacent
twin vertices, by Proposition~\ref{B(G[pm,cm])},
$\dim(K_n[H])=n\lfloor{2m+2\over5}\rfloor$ for
$m\notin\{2,3,6\}$. If $H=P_2$ or $H=C_3$, then $K_n[H]$ is the
complete graph and hence, $\dim(K_n[P_2])=2n-1$ and
$\dim(K_n[C_3])=3n-1$.
\par Now let $H\in\{P_3,P_6,C_6\}$. Also,
let $P_m=(w_1,w_2,\ldots,w_m)$, $C_m=(w_1,\ldots,w_m,w_1)$, and $B$ is a
basis of $K_n[H]$. By the proof of Lemma~\ref{lower bound}, $B$
contains at least $\dim_2(H)$ vertices from each set
$R_i=\{v_{rs}\in V(K_n[H])|r=i\}$, and $B\cap R_i$ resolves $R_i$,
$1\leq i\leq n$. Let $J=\{i|~\dim_2(H)=|B\cap R_i|\}$. If
$|J|\geq2$, then there exist $i,j$, $1\leq i,j\leq n$, such that
$|B\cap R_i|=|B\cap R_j|=\dim_2(H)$. Let $A_i=\{w_s|v_{is}\in
B\cap R_i\}$ and $A_j=\{w_s|v_{js}\in B\cap R_j\}$. Since
 $d_{K_n[H]}(v_{rs},v_{rq})=a_H(w_s,w_q)$ for each
 $r,s,q,~1\leq r\leq n,~1\leq s,q\leq m$, we conclude that $A_i$ and $A_j$
 are adjacency bases of $H$. On the other hand, for each adjacency
 basis $A$ of $H$ there exist a vertex $w\in V(H)$ such that
 $r_2(w|A)=(1,1,\ldots,1)$. Therefore, there exist vertices $w_1,
 w_2\in V(H)$ such that $r_2(w_1|A_i)=r_2(w_2|A_j)=(1,1,\ldots,1)$.
 Consequently, $r(v_{i1}|B\cap R_i)=r(v_{j2}|B\cap
 R_j)=(1,1,\ldots,1)$. Also, we have $r(v_{i1}|B\backslash R_i)=r(v_{j2}|B\backslash
 R_j)=(1,1,\ldots,1)$. Hence, $r(v_{i1}|B)=r(v_{j2}|B)$, which is
 a contradiction. Thus, $|J|\leq1$. Therefore, $\dim(K_n[H])\geq
 n\dim_2(H)+n-1$. On the other hand, the set $\{v_{rs}\in
 V(K_n[P_3])|s\neq3\}\backslash\{v_{12}\}$ is a resolving set for
 $K_n[P_3]$ with cardinality $n\dim_2(H)+n-1=2n-1$. Also, the set
 $\{v_{rs}\in
 V(K_n[H])|2\leq s\leq4\}\backslash\{v_{13}\}$ is a resolving set for
 $K_n[H]$, for $H\in\{P_6,C_6\}$. Consequently,
 $\dim(K_n[P_6])=\dim(K_n[C_6])=3n-1$.
 }\end{proof}
\begin{cor}\label{B(K_n[p_m,c_m]} Let $m=5k+r$. If $H\in\{\overline P_m,\overline C_m\}$, then for
all $n\geq 2$, \begin{description}\item (1)
$\dim({K_n[H]})=\left\{
\begin{array}{ll}
n\lfloor{2m+2\over5}\rfloor+n-1 &  ~{\rm if}~H\neq \overline C_3~{\rm and}~r~{\rm is~odd}, \\
2n &  ~{\rm if}~H=\overline C_3,\\
n\lfloor{2m+2\over5}\rfloor &   ~{\rm otherwise}.
\end{array}\right.$
\item (2) $\dim({P_n[H]})=\left\{
\begin{array}{ll}
4 &  ~{\rm if}~n=2~{\rm and}~H=\overline C_3, \\
n\lfloor{2m+2\over5}\rfloor+1 &  ~{\rm if}~n=2,~r~{\rm is ~odd,~ and}~H\neq \overline C_3,\\
n\lfloor{2m+2\over5}\rfloor+1 &  ~{\rm if}~n=3~{\rm and}
~H\in\{\overline P_2,\overline P_3,\overline P_6,\overline C_6\},\\
7 &  ~{\rm if}~n=3,~{\rm and}~H=\overline C_3,\\
n(m-1) &  ~{\rm if}~n\geq4~{\rm and}~H=\overline C_3,\\
n\lfloor{2m+2\over5}\rfloor &   ~{\rm otherwise}.
\end{array}\right.$
\item (3) $\dim({C_n[H]})=\left\{
\begin{array}{ll}
6 &  ~{\rm if}~n=3~{\rm and}~H=\overline C_3, \\
n\lfloor{2m+2\over5}\rfloor+2 &  ~{\rm if}~n=3~{\rm and}~H\neq\overline C_3,\\
n\lfloor{2m+2\over5}\rfloor+2 &  ~{\rm if}~n=4~{\rm and}~H\in\{\overline P_2,\overline P_3,\overline P_6,\overline C_6\},\\
10 &  ~{\rm if}~n=4,~{\rm and}~H=~\overline C_3,\\
n(m-1) &  ~{\rm if}~n\geq5,~{\rm and}~H=~\overline C_3,\\
n\lfloor{2m+2\over5}\rfloor &   ~{\rm otherwise}.
\end{array}\right.$
\item (4) $\dim({K_{n_1,n_2,\ldots,n_t}[H]})=\left\{
\begin{array}{ll}
n\lfloor{2m+2\over5}\rfloor+n-t &  ~{\rm if}~H=\overline P_2, \\
n(m-1)+n-t &  ~{\rm if}~H=\overline C_3,\\
n\lfloor{2m+2\over5}\rfloor+n-j-1 &   ~{\rm if}~H\in\{\overline P_3,\overline P_6,\overline C_6\}~{\rm and}~j\neq t,\\
n\lfloor{2m+2\over5}\rfloor+n-t &   ~{\rm if}~H\in\{\overline P_3,\overline P_6,\overline C_6\}~{\rm and}~j=t,\\
n\lfloor{2m+2\over5}\rfloor+t-j-1 &   ~{\rm if}~m\geq7,~r~{\rm is~odd},~{\rm and}~j\neq t,\\
n\lfloor{2m+2\over5}\rfloor &   ~{\rm otherwise}.
\end{array}\right.$
\end{description}where $n_1, n_2,\ldots,n_j$ are at least $2$, $n_{j+1}=\ldots=n_t=1$, and
$\sum_{i=1}^tn_i=n$.
\end{cor}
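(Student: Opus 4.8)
The plan is to follow the template used for the companion corollary on $H\in\{P_m,C_m\}$, relying only on the identities $\overline{\overline P_m}=P_m$ and $\overline{\overline C_m}=C_m$. These say that, for every $m\ge 4$, the number $\dim(G[\overline P_m])$ (respectively $\dim(G[\overline C_m])$) is exactly the quantity written $\dim(G[\overline H])$ in Proposition~\ref{B(G[pm,cm])} when that proposition is applied with $H=P_m$ (respectively $H=C_m$); that quantity equals $n\lfloor\frac{2m+2}{5}\rfloor$ if $r$ is even, equals $n\lfloor\frac{2m+2}{5}\rfloor+a(G)+b(G)-\iota_{_K}(G)-\iota_{_N}(G)$ if $m=6$, and equals $n\lfloor\frac{2m+2}{5}\rfloor+a(G)-\iota_{_K}(G)$ if $r$ is odd and $m\ne 6$. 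So for $m\ge 4$ the corollary reduces to substituting the twin-parameters of the four host graphs into these three expressions.

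First I would record those parameters. The graph $K_n$ is a single type-(K) class, so $a(K_n)=n$, $\iota_{_K}(K_n)=1$, $b(K_n)=\iota_{_N}(K_n)=0$, $\iota(K_n)=1$; the graphs $P_n$ ($n\ge 4$) and $C_n$ ($n\ge 5$) have no twins, so $a=b=\iota_{_K}=\iota_{_N}=0$ while $\iota(P_n)=\iota(C_n)=n$. The small hosts are treated separately and account for the low-order rows of the statement: $P_2=K_2$ and $C_3=K_3$ are complete, $P_3$ has one type-(N) class (its endpoints, so $b=2$, $\iota_{_N}=1$, $\iota=2$), and $C_4=K_{2,2}$ has two type-(N) classes ($b=4$, $\iota_{_N}=2$, $\iota=2$). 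For $K_{n_1,\ldots,n_t}$ with $n_1,\ldots,n_j\ge 2$ and $n_{j+1}=\cdots=n_t=1$, each part of size at least $2$ is a type-(N) class and the $t-j$ universal vertices form one type-(K) class if $t-j\ge 2$ and one type-(1) class if $t-j=1$; hence $b(G)=n-(t-j)$, $\iota_{_N}(G)=j$, $(a(G),\iota_{_K}(G))=(t-j,1)$ if $t-j\ge 2$ and $(0,0)$ otherwise, and so $\iota(G)=j+1$ if $t>j$, $\iota(G)=t$ if $t=j$. Plugging these into the three expressions above yields every row with $m\ge 4$.

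Next I would settle $m\in\{2,3\}$, excluded from Proposition~\ref{B(G[pm,cm])}. Here $\overline P_2=\overline K_2$, $\overline P_3=K_2\cup K_1$ and $\overline C_3=\overline K_3$, and by Proposition~\ref{characterization 1, m-1} one has $\dim_2(\overline K_2)=\dim_2(\overline P_3)=1$ and $\dim_2(\overline K_3)=2$. Checking their few adjacency bases, $\overline K_2$ and $\overline K_3$ satisfy the hypotheses of Theorem~\ref{nB2+b(G) N(G)} (no vertex is adjacent to all basis vertices, and outside each adjacency basis lies a vertex at distance at least $2$ from all of it), giving $\dim(G[\overline K_2])=n+b(G)-\iota_{_N}(G)$ and $\dim(G[\overline K_3])=2n+b(G)-\iota_{_N}(G)$; and $\overline P_3$ satisfies the hypotheses of Theorem~\ref{thm generalG[H]} (every adjacency basis admits a vertex of adjacency representation $\mathbf 1$ and one of adjacency representation $\mathbf 2$), giving $\dim(G[\overline P_3])=2n-\iota(G)$. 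For $G\in\{K_n,C_3,K_{n_1,\ldots,n_t}\}$ one may alternatively note that $G[\overline K_3]$ is the complete multipartite graph obtained by tripling each part of $G$ (and $G[\overline K_2]$ by doubling it) and apply Proposition~\ref{B_2 multipartite}. Substituting the parameters recorded above, and noting that the $m=6$ value of Proposition~\ref{B(G[pm,cm])} is the odd-$r$ value increased by $b(G)-\iota_{_N}(G)$ (which is how $\{\overline P_6,\overline C_6\}$ is absorbed into the odd-$r$ cases), completes the verification.

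The hard part is organizational rather than conceptual: one must manage, all at once and for four host graphs, the parity of $r$, the exceptional orders $m\in\{2,3,6\}$, the small hosts $P_2,P_3,C_3,C_4$ where $G$ itself acquires twins or becomes complete, and especially $K_{n_1,\ldots,n_t}$, whose value splits according to whether $t-j$ is $0$, $1$, or at least $2$ via the type-(N)/type-(K)/type-(1) trichotomy of its vertices. These are precisely the places where the closed forms in the statement branch, so the proof is in essence a disciplined check that the substitution in each branch produces the claimed value.
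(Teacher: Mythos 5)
Your overall route is the one the paper intends: this corollary is stated without its own proof, and the implicit argument is exactly your reduction --- for $m\ge 4$ read off $\dim(G[\overline H])$ from the three formulas of Proposition~\ref{B(G[pm,cm])} after computing the twin parameters $a,b,\iota_{_K},\iota_{_N},\iota$ of $K_n$, $P_n$, $C_n$ and $K_{n_1,\ldots,n_t}$ (all of which you record correctly, including the type-(K)/type-(1) split of the $t-j$ singleton parts), and handle $m\in\{2,3\}$ separately via Theorem~\ref{thm generalG[H]} for $\overline P_3$, Theorem~\ref{nB2+b(G) N(G)} for $\overline K_2,\overline K_3$, and Proposition~\ref{B_2 multipartite} for the complete multipartite products. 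This is cleaner than the ad hoc lower-bound argument the paper uses for the small cases of the companion corollary.

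There is, however, one branch where your closing assertion that the substitution ``yields every row'' fails, and a disciplined check should have caught it. In item (3) the row ``$n\lfloor\frac{2m+2}{5}\rfloor+2$ if $n=3$ and $H\neq\overline C_3$'' carries no parity restriction, yet your own method gives the correction $a(K_3)-\iota_{_K}(K_3)=2$ only when $r$ is odd (or $H\in\{\overline P_3,\overline P_6,\overline C_6\}$); when $r$ is even Proposition~\ref{B(G[pm,cm])}(i) gives plain $n\lfloor\frac{2m+2}{5}\rfloor$. Concretely, $C_3[\overline P_2]=K_{2,2,2}$ has metric dimension $3$, not $5$, and since $\overline P_4\cong P_4$ one gets $\dim(C_3[\overline P_4])=6$, not $8$. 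So that row of the statement is missing the hypothesis ``$r$ is odd'' (compare the parallel row of item (2), which includes it), and your proof as written cannot establish the statement verbatim: either you flag and correct that branch, or the verification step you describe does not go through there. The remaining rows of items (1), (2) and (4), and the other rows of item (3), do check out under your substitution.
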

\begin{cor} For $n\geq 2$, \begin{description}\item (1)
$\dim({K_n[K_m]})=nm-1$ \item (2) $\dim({P_n[K_m]})=\left\{
\begin{array}{ll}
n(m-1) &  ~{\rm if}~n\geq3, \\
n(m-1)+1 &  ~{\rm if}~n=2. \end{array}\right.$ \item (3)
$\dim({C_n[K_m]})=\left\{
\begin{array}{ll}
n(m-1) &  ~{\rm if}~n\geq4, \\
n(m-1)+2 &  ~{\rm if}~n=3. \end{array}\right.$ \item (4)
$\dim({K_{n_1,n_2,\ldots,n_t}[K_m]})=\left\{
\begin{array}{ll}
n(m-1)+t-j-1 &  ~{\rm if}~j\neq t, \\
n(m-1) &   ~{\rm if}~j=t,
\end{array}\right.$\\where $n_1, n_2,\ldots,n_j$ are at least $2$, $n_{j+1}=\ldots=n_t=1$, and
$\sum_{i=1}^tn_i=n$. \item (5) $\dim({K_n[\overline
K_m]})=n(m-1)$ \item (6) $\dim({P_n[\overline K_m]})=\left\{
\begin{array}{ll}
n(m-1) &  ~{\rm if}~n\neq3, \\
n(m-1)+1 &  ~{\rm if}~n=3. \end{array}\right.$ \item (7)
$\dim({C_n[\overline K_m]})=\left\{
\begin{array}{ll}
n(m-1) &  ~{\rm if}~n\neq4, \\
n(m-1)+2 &  ~{\rm if}~n=4. \end{array}\right.$ \item (8)
$\dim({K_{n_1,n_2,\ldots,n_t}[\overline K_m]})=n(m-1)+n-t$, where
$n_1, n_2,\ldots,n_j$ are at least $2$, $n_{j+1}=\ldots=n_t=1$, and
$\sum_{i=1}^tn_i=n$.
\end{description}
\end{cor}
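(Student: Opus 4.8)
The plan is to obtain, once and for all, a closed formula for $\dim(G[K_m])$ and $\dim(G[\overline K_m])$ valid for every connected graph $G$ of order $n$ (here $m\ge 2$), and then to specialise $G$ to the four listed families. First I would collect the adjacency data of the second factor. By Proposition~\ref{characterization 1, m-1}(ii), $\dim_2(K_m)=m-1$, so every adjacency basis of $K_m$ equals $V(K_m)\setminus\{w\}$ for a single vertex $w$; since $K_m$ is complete we get $r_2(w\mid W)=\mathbf 1$, while the only non-basis vertex is $w$ itself, so no vertex has adjacency representation $\mathbf 2$ with respect to $W$. Hence $K_m$ satisfies hypotheses (i) and (ii) of Theorem~\ref{nB2+a(G) K(G)}. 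Dually, by Proposition~\ref{B2(H)=B2(overlineH)}, $\dim_2(\overline K_m)=\dim_2(K_m)=m-1$; every adjacency basis of $\overline K_m$ is again the complement of a single vertex $w$, and since $\overline K_m$ has no edges, $r_2(w\mid W)=\mathbf 2$ and no vertex has representation $\mathbf 1$. Hence $\overline K_m$ satisfies hypotheses (i) and (ii) of Theorem~\ref{nB2+b(G) N(G)}. Applying those two theorems gives, for every connected $G$ of order $n$,
\[
\dim(G[K_m])=n(m-1)+a(G)-\iota_{_K}(G),\qquad
\dim(G[\overline K_m])=n(m-1)+b(G)-\iota_{_N}(G).
\]

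So the remaining work is purely combinatorial: compute $a,b,\iota_{_K},\iota_{_N}$ for $G\in\{K_n,P_n,C_n,K_{n_1,\dots,n_t}\}$. For $K_n$ all vertices form a single class of type (K), so $a(K_n)=n$, $\iota_{_K}(K_n)=1$ and $b(K_n)=\iota_{_N}(K_n)=0$; this yields (1) and (5), and (1) can be cross-checked against $K_n[K_m]=K_{nm}$ using Lemma~\ref{B=1,B=n-1}(ii). For $P_n$: $P_2=K_2$ is as above; $P_3$ has one class of type (N) (the two endpoints) and the centre is of type (1), so $b(P_3)=2$, $\iota_{_N}(P_3)=1$, $a(P_3)=\iota_{_K}(P_3)=0$; and for $n\ge 4$ the path has no twins, so all four parameters vanish — this gives (2) and (6). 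Similarly $C_3=K_3$, $C_4$ has two classes of type (N) of size two (so $b(C_4)=4$, $\iota_{_N}(C_4)=2$), and $C_n$ for $n\ge 5$ has no twins — this gives (3) and (7). In each case the value falls out of substitution into the displayed formulas.

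Finally, for the complete $t$-partite graph $G=K_{n_1,\dots,n_t}$ the structural point is: two vertices are twins precisely when they lie in a common part (non-adjacent twins) or in two distinct \emph{singleton} parts (adjacent twins). Consequently each of the $j$ parts of size $\ge 2$ is a class of type (N), and the $t-j$ singleton parts coalesce into one class of type (K) when $t-j\ge 2$ and into a single class of type (1), or none, otherwise. Thus $\iota_{_N}=j$ and $b=n-(t-j)$, while $a-\iota_{_K}=t-j-1$ whenever $j\ne t$ and $a=\iota_{_K}=0$ when $j=t$; plugging these into the two displayed formulas produces (4) and (8). I expect the only genuinely delicate point of the whole argument to be exactly this twin analysis of the complete multipartite graph — recognising that distinct singleton parts give adjacent twins, and checking that the boundary case $t-j=1$ is consistent (there $a-\iota_{_K}=0=t-j-1$); the rest is bookkeeping already licensed by Theorems~\ref{nB2+a(G) K(G)} and \ref{nB2+b(G) N(G)}.
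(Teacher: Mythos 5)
Your proposal is correct and follows exactly the route the paper intends: the corollary is stated without proof as a direct application of Theorems~\ref{nB2+a(G) K(G)} and~\ref{nB2+b(G) N(G)} (after observing that every adjacency basis of $K_m$, respectively $\overline K_m$, is the complement of a single vertex whose representation is $\bf1$, respectively $\bf2$), combined with the computation of $a$, $b$, $\iota_{_K}$, $\iota_{_N}$ for the four families of graphs $G$. Your twin-class analysis of $K_{n_1,\dots,n_t}$, including the boundary case $t-j=1$, and your cross-check $K_n[K_m]=K_{nm}$ are all consistent with the stated values.
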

\begin{cor} Let  $m_1,\ldots,m_q\geq2$,
$m_{q+1}=\ldots=m_s$, and $m=\sum_{i=1}^sm_i$. Then for $n\geq 2$,
\begin{description} \item (1) $\dim({K_n[K_{m_1,\ldots,m_s}]})=\left\{
\begin{array}{ll}
n(m-q) &  ~{\rm if}~q=s, \\
n(m-q)-1 &  ~{\rm otherwise}. \end{array}\right.$ \item (2)
$\dim({P_n[K_{m_1,\ldots,m_s}]})=\left\{
\begin{array}{ll}
n(m-q) &  ~{\rm if}~q=s, \\
n(m-q-1) &  ~{\rm if}~q\neq s ~{\rm and} ~n\geq3, \\
n(m-q-1)+1 &  ~{\rm otherwise}. \end{array}\right.$ \item (3)
$\dim({C_n[K_{m_1,\ldots,m_s}]})=\left\{
\begin{array}{ll}
n(m-q) &  ~{\rm if}~q=s, \\
n(m-q-1) &  ~{\rm if}~q\neq s ~{\rm and}~ n\geq4, \\
n(m-q-1)+2 &  ~{\rm otherwise}. \end{array}\right.$ \item (4)
$\dim({K_{n_1,n_2,\ldots,n_t}[K_{m_1,\ldots,m_s}]})=\left\{
\begin{array}{ll}
n(m-q) &  ~{\rm if}~q=s, \\
n(m-q-1) &  ~{\rm if}~q\neq s ~{\rm and}~ j=t, \\
n(m-q-1)+t-j-1 &  ~{\rm otherwise}, \end{array}\right.$\\
where $n_1, n_2,\ldots,n_j$ are at least $2$, $n_{j+1}=\ldots=n_t=1$,
and $\sum_{i=1}^tn_i=n$.
 \item (5) $\dim({K_n[\overline
K_{m_1,\ldots,m_s}]})=\left\{
\begin{array}{ll}
n(m-q) &  ~{\rm if}~q=s, \\
n(m-q-1) & ~{\rm otherwise}.
\end{array}\right.$ \item (6) $\dim({P_n[\overline
K_{m_1,\ldots,m_s}]})=\left\{
\begin{array}{ll}
n(m-q) &  ~{\rm if}~q=s, \\
n(m-q-1) &  ~{\rm if}~q\neq s ~{\rm and}~ n\neq3, \\
n(m-q-1)+1 &  ~{\rm otherwise}. \end{array}\right.$ \item (7)
$\dim({C_n[\overline K_{m_1,\ldots,m_s}]})=\left\{
\begin{array}{ll}
n(m-q) &  ~{\rm if}~q=s, \\
n(m-q-1) &  ~{\rm if}~q\neq s ~{\rm and}~ n\neq4, \\
n(m-q-1)+2 &  ~{\rm otherwise}. \end{array}\right.$ \item (8)
$\dim({K_{n_1,n_2,\ldots,n_t}[\overline K_{m_1,\ldots,m_s}]})=\left\{
\begin{array}{ll}
n(m-q) &  ~{\rm if}~q=s, \\
n(m-q)-t &  ~{\rm otherwise},
\end{array}\right.$\\where $\sum_{i=1}^tn_i=n$.
\end{description}
\end{cor}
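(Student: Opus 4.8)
The plan is to combine Propositions~\ref{B_2 multipartite} and \ref{B2(H)=B2(overlineH)} with the four product theorems (Theorems~\ref{B1 B2}, \ref{thm generalG[H]}, \ref{nB2+a(G) K(G)} and \ref{nB2+b(G) N(G)}). By Proposition~\ref{B_2 multipartite}, $\dim_2(K_{m_1,\ldots,m_s})$ equals $m-q$ if $q=s$ and $m-q-1$ if $q\neq s$, and by Proposition~\ref{B2(H)=B2(overlineH)} the same values hold for $\overline K_{m_1,\ldots,m_s}$; indeed a set adjacency-resolves a graph iff it adjacency-resolves its complement, and, for a fixed $W$, a vertex outside $W$ has adjacency representation $\mathbf{1}$ with respect to $W$ in $K_{m_1,\ldots,m_s}$ iff it has representation $\mathbf{2}$ with respect to $W$ in $\overline K_{m_1,\ldots,m_s}$. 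Hence it suffices to analyse the adjacency bases of $H=K_{m_1,\ldots,m_s}$ and, for each, the occurrence of the representations $\mathbf{1}$ and $\mathbf{2}$; the statements for $G[\overline H]$ then follow by complementation, with $a(G)\leftrightarrow b(G)$ and $\iota_{_K}(G)\leftrightarrow\iota_{_N}(G)$ in the conclusions.

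Next I would pin down the adjacency bases of $H$. Its ``$\equiv$''-classes are the parts of size at least $2$ (each of type (N)) together with the set of all vertices lying in singleton parts, which forms one class of type (K) when there are at least two such vertices, a class of type (1) when there is exactly one, and is empty otherwise. Since ${\rm diam}(H)\le2$, adjacency and metric representations coincide; by Observation~\ref{twins} an adjacency basis must contain all but one vertex of each class of type (N) or (K), and since the resulting lower bound is exactly $\dim_2(H)$ by Proposition~\ref{B_2 multipartite}, the adjacency bases of $H$ are precisely the sets obtained by deleting one vertex from each class of type (N) or (K). Reading the representations off this description: for every adjacency basis $W$, a singleton-part vertex outside $W$ (which exists precisely when $q\neq s$) has representation $\mathbf{1}$, while a vertex of representation $\mathbf{2}$ exists if and only if $W$ is properly contained in a single part, and this happens exactly when $H$ is a star $K_{1,m_1}$ with $m_1\ge2$ (equivalently $q=1$ and $s=2$). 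This yields three cases: (a) if $q=s$, some adjacency basis has no vertex of representation $\mathbf{1}$ and none of representation $\mathbf{2}$, so Theorem~\ref{B1 B2} gives $\dim(G[H])=\dim(G[\overline H])=n(m-q)$; (b) if $q\neq s$ and $H$ is not a star, every adjacency basis has a vertex of representation $\mathbf{1}$ and none has one of representation $\mathbf{2}$, so Theorems~\ref{nB2+a(G) K(G)} and \ref{nB2+b(G) N(G)} give $\dim(G[H])=n(m-q-1)+a(G)-\iota_{_K}(G)$ and $\dim(G[\overline H])=n(m-q-1)+b(G)-\iota_{_N}(G)$; (c) if $H=K_{1,m_1}$ with $m_1\ge2$, every adjacency basis carries vertices of both representations, so Theorem~\ref{thm generalG[H]} gives $\dim(G[H])=\dim(G[\overline H])=n(m-q)-\iota(G)$ (using $\dim_2(H)+1=m-q$).

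It then remains to substitute the ``$\equiv$''-data of the four base graphs and simplify. For $G=K_n$ the single class is of type (K) of size $n$, so $a(K_n)=n$, $\iota_{_K}(K_n)=\iota(K_n)=1$ and $b(K_n)=\iota_{_N}(K_n)=0$; for $P_n$ one uses $P_2=K_2$, the lone type-(N) pair of $P_3$ (giving $b(P_3)=2$, $\iota_{_N}(P_3)=1$, $\iota(P_3)=2$) and the absence of twins for $n\ge4$; for $C_n$ one uses $C_3=K_3$, $C_4=K_{2,2}$ (giving $b(C_4)=4$, $\iota_{_N}(C_4)=2$) and the absence of twins for $n\ge5$; and for $K_{n_1,\ldots,n_t}$ the classes are its parts of size $\ge2$ (type (N)) and its singleton-part vertices (one type-(K) class if $t-j\ge2$, type (1) if $t-j=1$), so that $b=n-(t-j)$, $\iota_{_N}=j$, while $a=t-j$ and $\iota_{_K}=1$ if $t-j\ge2$ and $a=\iota_{_K}=0$ otherwise, and $\iota=j$ if $j=t$, $\iota=j+1$ if $j<t$. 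Feeding each pair $(G,H)$ into the appropriate one of (a)--(c) and simplifying (for instance $n\dim_2(H)+a(K_n)-\iota_{_K}(K_n)=n(m-q-1)+n-1=n(m-q)-1$) yields all the listed formulas, once the coincidences $P_2=K_2$, $C_3=K_3$ and $C_4=K_{2,2}$ are separated out.

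The step I expect to be the real obstacle is the case analysis of the adjacency bases of $H$ in the second paragraph: proving that for $q\ge2$ every adjacency basis of $K_{m_1,\ldots,m_s}$ meets at least two parts, so that no vertex ever has representation $\mathbf{2}$, and recognising that the star $K_{1,m_1}$ is the one exception that must be routed through Theorem~\ref{thm generalG[H]} rather than through Theorem~\ref{nB2+a(G) K(G)}. After the trichotomy is in place, the remainder is the careful but routine matching of each pair $(G,H)$ to the correct theorem and the correct values of $a$, $b$, $\iota_{_K}$, $\iota_{_N}$ and $\iota$.
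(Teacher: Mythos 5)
Your overall route --- computing $\dim_2(K_{m_1,\ldots,m_s})$ from Proposition~\ref{B_2 multipartite}, describing the adjacency bases via the twin classes, and then selecting among Theorems~\ref{B1 B2}, \ref{thm generalG[H]}, \ref{nB2+a(G) K(G)} and \ref{nB2+b(G) N(G)} --- is exactly the intended one (the paper states this corollary without any written proof), and your trichotomy (a)--(c) is correct. In particular you are right that every adjacency basis of $K_{m_1,\ldots,m_s}$ is obtained by deleting one vertex from each class of type (N) or (K), that for $q\neq s$ the deleted singleton-part vertex always has representation $\mathbf{1}$, and that a vertex of representation $\mathbf{2}$ occurs exactly when $H$ is the star $K_{1,m_1}$, which must therefore be routed through Theorem~\ref{thm generalG[H]}.

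The gap is in your final sentence: case (c) does \emph{not} ``yield all the listed formulas''. Your own computation in (c) gives $\dim(G[K_{1,m_1}])=n(m-q)-\iota(G)$, whereas the displayed entries for $q\neq s$ come from applying case (b) uniformly, i.e.\ they equal $n(m-q-1)+a(G)-\iota_{_K}(G)$. Since $n-\iota(G)=a(G)+b(G)-\iota_{_K}(G)-\iota_{_N}(G)$, the two values differ by $b(G)-\iota_{_N}(G)$, which is positive whenever $G$ has a pair of non-adjacent twins (dually, for $G[\overline H]$ they differ by $a(G)-\iota_{_K}(G)$). Concretely, take $G=P_3$ and $H=K_{2,1}=P_3$, so $q=1$, $s=2$, $m=3$: item (2) of the corollary gives $3(m-q-1)=3$, while Theorem~\ref{thm generalG[H]} gives $3\cdot 2-\iota(P_3)=4$, in agreement with the paper's earlier corollary on $P_n[P_m]$ (which yields $3\lfloor 8/5\rfloor+1=4$). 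So in the star case several of the stated formulas (e.g.\ (2) with $n=3$, (3) with $n=4$, (4) with $j\geq1$, (5), (6) with $n=2$) are simply false, and a proof carried out along your (correct) lines establishes a different value there. You must either record $H=K_{1,m_1}$ as an explicit exception with the value $n(m-q)-\iota(G)$, or add the hypothesis $(q,s)\neq(1,2)$; the claim that your case (c) reproduces the display cannot be repaired as written.
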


\begin{cor} Let $P$ be the Petersen graph. Then for $n\geq 2$, \begin{description}
\item (1) $\dim({K_n[P]})=3n$ \item (2) $\dim({P_n[P]})=\left\{
\begin{array}{ll}
3n &  ~{\rm if}~n\neq3, \\
3n+1 &  ~{\rm if}~n=3. \end{array}\right.$ \item (3)
$\dim({C_n[P]})=\left\{
\begin{array}{ll}
3n &  ~{\rm if}~n\neq4, \\
3n+2 &  ~{\rm if}~n=4. \end{array}\right.$ \item (4)
$\dim({K_{n_1,n_2,\ldots,n_t}[P]})=4n-t$, where $\sum_{i=1}^tn_i=n$.
 \item (5) $\dim({K_n[\overline
P]})=4n-1$ \item (6) $\dim({P_n[\overline P]})=\left\{
\begin{array}{ll}
3n &  ~{\rm if}~n\geq3, \\
7 &  ~{\rm if}~n=2. \end{array}\right.$ \item (7)
$\dim({C_n[\overline P]})=\left\{
\begin{array}{ll}
3n &  ~{\rm if}~n\geq4, \\
11 &  ~{\rm if}~n=3. \end{array}\right.$ \item (8)
$\dim({K_{n_1,n_2,\ldots,n_t}[\overline P]})=\left\{
\begin{array}{ll}
3n+t-j-1 &  ~{\rm if}~j\neq t, \\
3n &   ~{\rm if}~j=t,
\end{array}\right.$\\where $n_1, n_2,\ldots,n_j$ are at least $2$, $n_{j+1}=\ldots=n_t=1$, and
$\sum_{i=1}^tn_i=n$.
\end{description}
\end{cor}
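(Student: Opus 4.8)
The plan is to identify $P$ as an instance of Theorem~\ref{nB2+b(G) N(G)} and $\overline P$ as an instance of Theorem~\ref{nB2+a(G) K(G)}, then read each value off from the twin structure of the base graph $G$. First I would record that $P$ is $3$-regular of diameter $2$, so $a_P=d_P$ and hence $\dim_2(P)=\dim(P)=3$; by Proposition~\ref{B2(H)=B2(overlineH)} also $\dim_2(\overline P)=3$. Moreover $P$ and $\overline P$ share a vertex set, and $a_{\overline P}(x,y)=3-a_P(x,y)$ for all $x\neq y$, so a set $W$ is an adjacency basis of $P$ exactly when it is one of $\overline P$; under this identification a vertex (outside $W$) has adjacency representation $\mathbf 1$ with respect to $W$ in $P$ precisely when it has representation $\mathbf 2$ with respect to $W$ in $\overline P$, and conversely.

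The crux is to show that \emph{every adjacency basis $W=\{w_1,w_2,w_3\}$ of $P$ is an independent set, has a vertex of adjacency representation $\mathbf 2$, and has no vertex of adjacency representation $\mathbf 1$}; I would prove this using only that $P$ is strongly regular with parameters $(10,3,0,1)$, i.e. adjacent vertices of $P$ have no common neighbour while non-adjacent ones have exactly one. (a) If $W$ contained an edge, say $w_1\sim w_2$, then $N[w_1]\cap N[w_2]=\{w_1,w_2\}$, so $T:=V(P)\setminus(N[w_1]\cup N[w_2])$ has exactly four vertices, every one of them at distance $2$ from both $w_1$ and $w_2$; since $a_P(w_3,\cdot)$ equals $0$ only at $w_3$ and lies in $\{1,2\}$ elsewhere, it cannot be injective on $T$, so $W$ does not resolve $T$ --- impossible. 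Hence $W$ is independent. (b) If some $u$ were adjacent to all of $w_1,w_2,w_3$, then $u\in N(w_i)\cap N(w_j)$ for all $i\neq j$, and as each such intersection has size $1$ it equals $\{u\}$; then the two vertices of $N(w_1)\setminus\{u\}$ are each adjacent, among $W$, only to $w_1$, so both have representation $(1,2,2)$, contradicting that $W$ resolves. So $W$ has no $\mathbf 1$-vertex. (c) If $W$ had no $\mathbf 2$-vertex, then every vertex outside $W$ would be adjacent to some $w_i$, so $N[w_1]\cup N[w_2]\cup N[w_3]=V(P)$; inclusion--exclusion, using $|N[w_i]|=4$ and pairwise intersections of size $1$, would then force a common neighbour of $w_1,w_2,w_3$, contradicting (b). So $W$ has a $\mathbf 2$-vertex. (That an adjacency basis exists at all is because $\dim(P)=3$: for instance any three of the four $2$-element subsets of $\{1,2,3,4,5\}$ that share a fixed element form one, with the fourth such subset as its $\mathbf 2$-vertex.) Passing to complements via the correspondence above, every adjacency basis of $\overline P$ has a $\mathbf 1$-vertex and no $\mathbf 2$-vertex.

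Consequently $H=P$ meets hypotheses (i),(ii) of Theorem~\ref{nB2+b(G) N(G)} and $H=\overline P$ meets those of Theorem~\ref{nB2+a(G) K(G)}, so for every connected graph $G$ of order $n$,
$$\dim(G[P])=3n+b(G)-\iota_{_N}(G)\qquad\text{and}\qquad\dim(G[\overline P])=3n+a(G)-\iota_{_K}(G).$$
It remains to plug in the twin parameters of each $G$ in the statement: $K_n$ has its whole vertex set as a single type-(K) class, so $a=n,\ \iota_{_K}=1,\ b=\iota_{_N}=0$; the path $P_n$ for $n\ge4$ and the cycle $C_n$ for $n\ge5$ have no twins; $P_3$ has its two ends as one type-(N) class ($b=2,\ \iota_{_N}=1$), $C_4$ has two type-(N) classes ($b=4,\ \iota_{_N}=2$), while $P_2=K_2$ and $C_3=K_3$; and in $K_{n_1,\dots,n_t}$ each part of size at least $2$ is a type-(N) class and the $t-j$ singleton parts together form a single class (type (K) when $t-j\ge2$, type (1) when $t-j=1$), giving $b-\iota_{_N}=n-t$ and $a-\iota_{_K}=t-j-1$ if $j\neq t$, $=0$ if $j=t$. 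Substituting these into the two displayed formulas produces exactly the eight cases of the corollary; the only delicate point is that the non-generic entries in (2),(3),(6),(7) ($3n+1$, $3n+2$, $7=3\cdot2+1$, $11=3\cdot3+2$) arise precisely from the small graphs $P_3$, $C_4$, $K_2$, $K_3$, whose twin structure is nontrivial.

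I expect the one genuinely hard step to be the characterisation of the adjacency bases of $P$ in the second paragraph --- it is the only place where the special combinatorics of the Petersen graph (its strong regularity, or equivalently its description as the Kneser graph $KG(5,2)$) is used, the rest being routine bookkeeping with the four theorems of~\cite{lexico} and with twin classes.
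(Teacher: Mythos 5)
Your proof is correct, and it follows exactly the route the paper's framework dictates: show that every adjacency basis of the Petersen graph admits a vertex with representation $\bf2$ but none with representation $\bf1$, apply Theorem~\ref{nB2+b(G) N(G)} to $G[P]$ and Theorem~\ref{nB2+a(G) K(G)} to $G[\overline P]$, and then substitute the twin parameters $a(G),b(G),\iota_{_K}(G),\iota_{_N}(G)$ of $K_n$, $P_n$, $C_n$ and $K_{n_1,\ldots,n_t}$. The paper states this corollary with no proof at all, so your argument --- in particular the strong-regularity analysis showing that every adjacency basis of $P$ is independent, dominates at most $9$ vertices, and has no common neighbour --- supplies precisely the verification the paper leaves implicit, and all eight case computations check out.
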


 It is easy to see that if $G$ is a bipartite
graph of order at least $3$, then it does not have any pair of
adjacent twins. Therefore, by Theorem~\ref{nB2+b(G) N(G)},
$\dim(G[H])=n\dim_2(H)+b(G)-\iota_{_N}(G)$.



\begin{thebibliography}{99}

\bibitem{baily}
{ R.F. Bailey and P.J. Cameron}, {\it Base size, metric dimension
and other invariants of groups and graphs},~Bull. London Math.
Soc. {\bf  43} (2011) 209-242.

\bibitem{net2}
{Z. Beerliova, F. Eberhard, T. Erlebach, A. Hall, M. Hoffmann, M. Mihalak and L. S. Ram},
Network dicovery and verification,
{\em IEEE Journal On Selected Areas in Communications} {\bf24(12)} (2006) 2168-2181.

\bibitem{K dimensional} { P.S. Buczkowski, G. Chartrand, C. Poisson, and P. Zhang}, {\it On k-dimensional
graphs and their bases},~Periodica Mathematica Hungarica {\bf
46(1)} (2003) 9-15.

\bibitem{cartesian product}
 { J. Caceres, C. Hernando, M. Mora, I.M. Pelayo, M.L. Puertas, C. Seara, and D.R. Wood},
{\it On the metric dimension of cartesian products of graphs},~SIAM Journal Discrete Mathematics
{\bf  21(2)} (2007) 423-441.


\bibitem{Ollerman} { G. Chartrand, L. Eroh, M.A. Johnson, and O.R. Ollermann},
 {\it Resolvability in graphs and the metric dimension of a graph},~Discrete Applied Mathematics {\bf  105} (2000) 99-113.

\bibitem{Harary}{ F. Harary and R.A Melter}, {\it On the metric dimension of a graph},~Ars Combinatoria {\bf  2} (1976) 191-195.

\bibitem{extermal} { C. Hernando, M. Mora, I.M. Pelayo, C. Seara, and D.R. Wood},
{\it Extremal Graph Theory for Metric Dimension and Diameter},~
The Electronic Journal of Combinatorics {\bf 17 } (2010) \#R30.


\bibitem{lexico}{ M. Jannesari  and B. Omoomi},
 {\it The metric dimension of the  lexicographic product of
graphs},~Discrete Mathematics.
{\bf 312(22)} (2012) 3349-3356.

\bibitem{landmarks}{ S. Khuller, B. Raghavachari,  and A. Rosenfeld},
 {\it Landmarks in graphs},~Discrete Applied Mathematics
{\bf  70(3)} (1996) 217-229.

\bibitem{digital}{R. A. Melter and I. Tomescu},
{Metric bases in digital geometry},
{\em Computer Vision Graphics and Image Processing} {\bf 25} (1984) 113-121.

\bibitem{coin}{A. Sebo and E. Tannier},
{On metric generators of graphs},
{\em Mathematics of Operations Research} {\bf 29(2)} (2004) 383-393.

\bibitem{Slater1975} { P.J. Slater}, {\it Leaves of trees},
Congressus Numerantium {\bf  14} (1975) 549-559.

  \end{thebibliography}
\end{document}